\newcommandx{\pcomment}[2][1=]{\todo[linecolor=red,backgroundcolor=red!25,bordercolor=red,#1]{#2}}
\newcommandx{\kcomment}[2][1=]{\todo[linecolor=blue,backgroundcolor=blue!25,bordercolor=blue,#1]{#2}}
\newif\ifpreprint
\newcommand{\rom}[1]{\uppercase\expandafter{\romannumeral #1\relax}}
\newcommand{\black}{\color{black}}
  \theoremstyle{definition}
  \newtheorem{theorem}{Theorem}[section]
  \newtheorem{corollary}[theorem]{Corollary}
  \newtheorem{lemma}[theorem]{Lemma}
  \newtheorem{definition}[theorem]{Definition}
  \newtheorem{remark}[theorem]{Remark}
  \newtheorem*{assumption*}{Assumption}
  \numberwithin{equation}{section}
  \newcommand{\subjclass}[1]{\bigskip\noindent\emph{2010 Mathematics Subject Classification:}\enspace#1}
  \newcommand{\keywords}[1]{\noindent\emph{Keywords:}\enspace#1}
\newcommand{\VR}{{\mathbf{R}}}
\newcommand{\Dsf}{\mathsf{D}}
\providecommand{\Cj}{{\cal J}}
\providecommand{\Cp}{{\cal P}}
\newcommand{\eps}{{\varepsilon}}
\newcommand{\overbar}[1]{\mkern 1.5mu\overline{\mkern-1.5mu#1\mkern-1.5mu}\mkern 1.5mu}
\newcommand{\ben}{\begin{equation}}
\newcommand{\een}{\end{equation}}
\newcommand{\benn}{\begin{equation*}}
\newcommand{\eenn}{\end{equation*}}
\newcommand{\tow}{\rightharpoonup}
\author[1]{Peter Gangl\thanks{E-Mail: gangl(at)math.tugraz.at}}
\author[2]{Kevin Sturm\thanks{E-Mail: kevin.sturm(at)tuwien.ac.at}}
\affil[1]{TU Graz, Steyrergasse 30/III, 
    8010 Graz, Austria}
\affil[2]{TU Wien, Wiedner Hauptstr. 8-10,
      1040 Vienna, Austria}
\title{Topological derivative for PDEs on surfaces}
\date{\today}
\begin{document}

\maketitle

\begin{abstract}
    In this paper we study the problem of the optimal distribution of two materials on smooth submanifolds $M$ of dimension $d-1$ in $\VR^d$ without boundary by means of the topological derivative. We consider a class of shape optimisation problems which are constrained by a linear partial differential equation on the surface. We examine the singular perturbation of the differential operator and material coefficients and derive the topological derivative. Finally, we show how the topological derivative in conjunction with a level set method on the surface can be used to solve the topology optimisation problem numerically.

\subjclass{Primary 49Q10; Secondary 49Qxx,90C46.}

\keywords{topological derivative; topology optimisation; asymptotic analysis.}
\end{abstract}

%\tableofcontents
  \maketitle

\section{Introduction}

The topological derivative of a shape function $\mathcal J = \mathcal J(\Omega)$ at a point $q \in \Omega$ measures the sensitivity of $\mathcal J$ with respect to a singular perturbation of the domain $\Omega$. The concept was first introduced in the context of mechanical engineering in \cite{EschenauerKobelevSchumacher1994}, and later introduced in a mathematically rigorous way in \cite{SokolowskiZochowski1999,GarreauGuillaumeMasmoudi2001}. We refer the reader to \cite{b_NOSO_2013a} for a thorough introduction to the concept of topological derivatives and many applications. 

In all of these previous works the partial differential equation is always defined on an open subset of $\VR^d$. However, in a number of applications the arising partial differential equation is defined on submanifolds of $\VR^d$. Related works are \cite{deng2020topology}, where a topology optimisation problem of surface flows is studied by means of a material interpolation approach, and \cite{KANG201618} where topological derivatives for shell structures were derived heuristically and used in an iterative algorithm. In contrast to PDEs defined on open subsets of $\VR^d$, geometrical 
properties of the manifold emerge when performing a singular perturbation of a surface PDE. Let us mention \cite{MR2792999} where the topological perturbation on the boundary of a PDE defined on a domain is performed, which is 
related to our work. We also refer to \cite{a_HILA_2008a,a_HILANO_2011a} where the topological sensitivity of an electrical impedance model is performed, however the equations are defined in a 
subdomain of $\VR^d$ rather than a manifold. 

Let $(M,g)$ be a compactly embedded submanifold in $\VR^d$ of dimension $d-1$ equipped  
with the Euclidean metric $g$ of $\VR^d$. The associated Riemannian distance is denoted by $\mathfrak d:M\times M\to \VR$. The submanifold is as usual equipped with the subspace topology. The main subject of this paper is the derivation of the topological derivative of a shape optimisation problem which is constrained by a partial differential equation (PDE) on the surface $M$. We are interested in problems of the form
\begin{align}
    \underset{(\Omega,u)}{\mbox{min }} &\, J(\Omega, u) \label{eq:J_intro} \\
    \mbox{s.t. } u \in H^1(M): 
\int_M \beta_{\Omega}& \nabla^M u \cdot \nabla^M v + \gamma_\Omega uv \;dx = \int_M f_\Omega v\;dx \qquad \mbox{for all } v \in H^1(M),  \label{eq:state_intro}
\end{align}
where $H^1(M)$ denotes the space of square integrable functions with square integrable weak derivative on $M$. Here, $\Omega \subset M$ is an admissible open subset of $M$ and the functions 
\ben \label{eq_defBetaOmega}
    \beta_\Omega  = \beta_1\chi_\Omega + \beta_2\chi_{M\setminus \Omega},
    \qquad \gamma_\Omega  = \gamma_1\chi_\Omega + \gamma_2\chi_{M\setminus \Omega},
    \qquad f_\Omega = f_1 \chi_\Omega + f_2 \chi_{M\setminus \Omega},
\een
 are piecewise constant on $M$ with $\beta_1,\beta_2,\gamma_1,\gamma_2> 0$ and $f_1, f_2 \in \VR$. 
 The symbol $\nabla^M $ denotes the surface gradient on $M$. Let $\omega\subset \VR^{d-1}$ be a connected domain 
containing the origin. For a point $q \in M$ we denote the corresponding tangent space by $T_q M$. Given a point $q\in M$ we introduce $T_\eps(x):= \exp_q(E(\eps x))$, where $\exp_q$ denotes the exponential map at $q\in M$ and $E:\VR^{d-1}\to T_qM$ is an isomorphism. For a given shape $\Omega \subset M$, we denote the unique solution to \eqref{eq:state_intro} by $u(\Omega)$ and the reduced cost function by $\Cj(\Omega):= J(\Omega,u(\Omega))$. Denoting $\omega_\eps := T_\eps(\omega)$, the goal of this paper is the rigorous derivation of the topological derivative
\ben \label{eq:TD_intro}
    d \Cj(\Omega)(q) := \begin{cases}
                      \underset{\eps \searrow 0}{\mbox{lim}} \, \frac{\Cj(\Omega \cup \omega_\eps) - \Cj(\Omega)}{|\omega_\eps|} & q \in M \setminus \Omega, \\
                      \underset{\eps \searrow 0}{\mbox{lim}} \, \frac{\Cj(\Omega\setminus \overbar \omega_\eps) - \Cj(\Omega)}{|\omega_\eps|} & q \in \Omega.
                     \end{cases}
\een

\paragraph*{Structure of the paper}
In Section~\ref{sec:M}, we introduce the setting needed to deal with singular perturbations of manifolds and show some auxiliary results. These results will be used for the rigorous derivation of the topological derivative \eqref{eq:TD_intro} for a class of cost functions. Finally, in Section~\ref{sec:numerics}, we present numerical results using the topological derivative obtained in Section~\ref{sec:topo_deriv} and show its pertinence.

\subsection*{Notation and definitions}

For an open set $\Dsf\subset \VR^d$ we denote by $L_2(\Dsf)$ and $H^1(\Dsf)$ the standard $L_2$ space and Sobolev space. We equip $\VR^d$ with the Euclidean norm $| \cdot |$ and use the same notation for the corresponding matrix (operator) norm.

Let $M\subset \VR^{d-1}$ be an embedded submanifold without boundary. We denote by $\nabla^M f$ the tangential gradient of a function $f\in H^1(M)$.

\section{Preliminaries for PDEs posed on surfaces}\label{sec:M}
We collect some results which will be helpful for analysing the sensitivity of a shape functional $\Cj =\Cj(\Omega)$ with respect to a singular perturbation of the subset $\Omega \subset M$ of the manifold $M$. The key ingredient will be the fact that the exponential map associated to a point $q\in M$ in the manifold is locally diffeomorphic between the tangent space $T_qM$ and the manifold $M$.

\subsection{Singular perturbation}

For a given point $q\in M$ we denote by $\exp_q:T_qM \to M $ the exponential map associated with the manifold $M$. From now on we choose an orthonormal basis $\{v_1,\ldots, v_{d-1}\}$ of $T_qM$ with respect to the metric $g(\cdot,\cdot)$, which naturally induces an isomorphism $E:\VR^{d-1} \to T_qM$. This isomorphism $E$ 
is given by $(\alpha_1,\ldots, \alpha_{d-1}) \mapsto 
\sum_{i=1}^{d-1}\alpha_i v_i$ and hence can be written with $V:= (v_1,\ldots, v_{d-1})$ as 
$\alpha \mapsto V\alpha$ with $\alpha =(\alpha_1,\ldots, \alpha_{d-1})$. Notice that the 
norm is preserved, i.e. $\|E\alpha\|_{T_qM} = |\alpha|$ for all $\alpha\in \VR^{d-1}$ where $|\cdot|$ denotes the Euclidean norm on $\VR^{d-1}$. In this sense we will identify the tangent space $T_qM$ at a point $q\in M$ with $\VR^{d-1}$. 
In view of $d\exp_q(0)=\text{id}_{T_qM}$ on $T_qM$ the inverse function theorem implies 
that $\exp_q$ is a diffeomorphism from a ball $B_\delta(0)\subset T_qM$ of radius $\delta$ onto an open subset $U_q$ of $q$ in $M$. We will denote the pre-image under $E$ of this ball $B_\delta(0)$ by $B := E^{-1}(B_\delta(0))$ such that $B_\delta(0) = E(B)$. Finally, we define the mapping
\ben \label{eq_defTeps}
\begin{split}
    T_\eps : \VR^{d-1} &\rightarrow M \\
    x &\mapsto \exp_q(E(\eps x)).
\end{split}
\een
Figure \ref{fig_trafos} shows an illustration of all involved mappings and sets.

\begin{figure}
    \begin{center}
        \includegraphics[width=.7\textwidth]{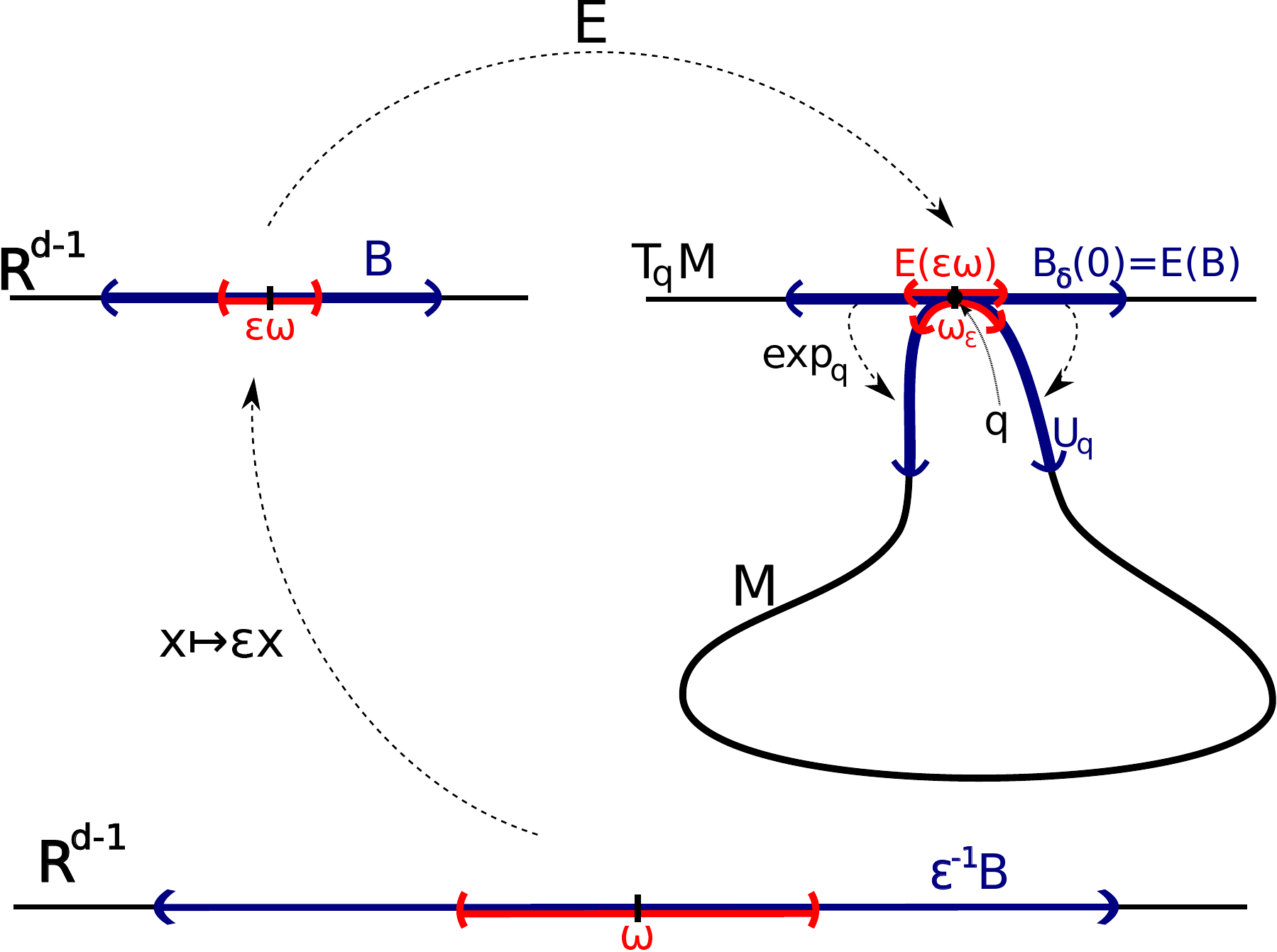}
    \end{center}
    \caption{Transformations used throughout this paper illustrated for the case $d=2$ with a manifold $M \subset \VR^2$. For a point $q \in M$, the tangent space $T_q M$ is identified with $\VR^{d-1}$ via an isomorphism $E$. The exponential map $\exp_q$ associated to the point $q\in M$ is a diffeomorphism between a ball $E(B) \subset T_qM$ and a neighborhood $U_q \subset M$ of $q$ in $M$. The mapping $T_\eps(x) = (\exp_q \circ E)(\eps x)$ maps from $\VR^{d-1}$ to the manifold $M$ and it holds that $T_\eps(\omega) = \omega_\eps$.}
    \label{fig_trafos}
\end{figure}

\begin{definition}
    Let $\omega\subset \VR^{d-1}$ be an open, bounded and connected set with $0\in \omega$. At a point $q\in M$, we define the geodesic perturbation of $M$ with respect to $\omega$ by
    \ben\label{eq:omega_eps}
\omega_\eps := T_\eps(\omega) =  \exp_q(E(\eps\omega)), \quad \eps >0.
\een
We note that in the case of the unit ball $\omega = B_1(0)\subset \VR^{d-1}$, we obtain the geodesic ball
\ben
\omega_\eps = \{x\in M:\; \mathfrak d(x,q)<\eps\}.
\een
\end{definition}

\begin{remark}
    Instead of defining the perturbation $\omega_\eps$ using the exponential map as in \eqref{eq:omega_eps} we could also work with so called retractions; see \cite[p.55, Def. 4.1.1]{b_ABMASE_2008a}. 
A retraction at $q\in M$ is a mapping $R_q:B_\delta(0)\subset T_qM \to M$ for $\delta >0$ small satisfying the following
two properties:
\begin{itemize}
    \item[(i)] $R_q(0) = q$
    \item[(ii)] $dR_q(0) = \text{id}_{T_qM}$ with the identification $T_0(T_qM) = T_qM$. 
\end{itemize}
Condition (ii) guarantees that $R_q$ is a local diffeomorphism. It can be readily checked that all 
subsequent computations remain valid replacing the exponential map $\exp_q$ by a retraction 
$R_q$ and accordingly replacing $\omega_\eps$ defined in \eqref{eq:omega_eps} by $\omega_\eps := R_q(E(\eps\omega))$ for $\eps >0$.
\end{remark}

We now introduce the topologically perturbed version of the state equation \eqref{eq:state_intro}. Let $q\in M\setminus \overline{\Omega}$, let $\delta>0$ small enough such that $U_q:=\exp_q(B_\delta(0)) \Subset M \setminus \overbar \Omega$ and set $\Omega_\eps := \Omega\cup \omega_\eps$. We denote by $u_\eps \in H^1(M)$ the solution of \eqref{eq:state_intro} with $\Omega = \Omega_\eps$, that is, 
\ben\label{eq:state_per}
\int_M \beta_{\Omega_\eps} \nabla^M u_\eps \cdot \nabla^M \varphi + \gamma_{\Omega_\eps} u_\eps \varphi \;dx = \int_M f_{\Omega_\eps}\varphi\;dx \quad \text{ for all } \varphi \in H^1(M). 
\een
\subsection{Preliminaries}
We make a few observations which will be helpful in the derivation of the topological derivative in the next section. For an injective matrix $A\in \VR^{n\times m}$ we define the pseudoinverse by $A^\dagger := (A^\top A)^{-1}A^\top$. For $x \in \VR^{d-1}$ and $\eps \geq0$ small we introduce the following notation:
\begin{align}
\Phi(x) :=& (\exp_q \circ E)(x), \label{eq_defPhi} \\
g_\eps(x) :=&  \sqrt{\mbox{det}( \partial \Phi(\eps x) ^\top \partial \Phi (\eps x) )}, \label{eq:g_eps} \\
\Psi_\eps(x):=& (\partial \Phi( \eps x ))^\dagger \in \VR^{d-1 \times d}, \\
A_\eps(x) :=& \Psi_\eps(x) \Psi_\eps(x)^\top \in \VR^{d-1 \times d-1}.\label{eq_defA}
\end{align}
Note that $\partial \Phi(\eps x) = \partial \exp_q(E(\eps x))(V) \in \VR^{d\times d-1}$ and $\partial T_\eps(x) = \eps \partial \Phi(\eps x)$.
Note that, since the manifold $M$ is assumed to be smooth, also $\exp_q$ is smooth on $B_\delta(0)$ and thus $\partial \Phi$ is well-defined in $B_\delta(0)$.

We collect the following properties of the transformation between the neighborhood $U_q$ of $q \in M$ and a subset of $\VR^{d-1}$ by the mappings $\Phi$ and $T_\eps$.
\begin{lemma} \label{lem_trafos}
    Let $T_\eps$ be as defined in \eqref{eq_defTeps} and $\Phi$, $g_\eps$, $\Psi_\eps$, $A_\eps$ as defined in \eqref{eq_defPhi}--\eqref{eq_defA}.
    \begin{enumerate}
     \item[a)]
    For $v \in H^1(M)$, it holds
    \begin{align}
        (\nabla^M v ) (q)&= ( \partial \Phi(0)^\dagger)^\top \nabla (v \circ \Phi)(0) = V \nabla (v\circ \Phi)(0), \label{eq_trafoPsi}
    \end{align}
     \item[b)]
    For $v \in H^1(M)$, it holds
    \begin{align}
        (\nabla^M v ) \circ T_\eps &= ( \partial T_\eps^\dagger)^\top \nabla (v \circ T_\eps) = \eps^{-1} \Psi_\eps^\top \nabla (v\circ T_\eps).
    \end{align}
    \item[c)] For $U \subset U_q \subset M$ and $f \in L_1(U)$, it holds
        \begin{align}
            \int_U f(x) \; dx =& \int_{T_\eps^{-1}(U)} \eps^{d-1} g_\eps(x) (f\circ T_\eps) (x)  \; dx .
        \end{align}
    \item[d)]
        For $U \subset U_q \subset M$ and $u, v \in H^1(M)$, it holds
        \begin{align}
            \int_U \nabla^M u \cdot \nabla^M v \; dx = \int_{T_\eps^{-1}(U)} \eps^{d-1} g_\eps(x) A_\eps(x) \nabla (u\circ T_\eps) \cdot \nabla (v \circ T_\eps) \; dx .
        \end{align}
    \end{enumerate}
\end{lemma}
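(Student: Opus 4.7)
The plan is to prove (a) first as the central pointwise calculation, and then derive (b)--(d) by specialization and combination with the standard change-of-variables formula on the parametrized patch $U_q$. The unifying identity is that in any local parametrization $\Psi$ of $M$, the surface gradient satisfies $(\nabla^M v)\circ \Psi = ((\partial \Psi)^\dagger)^\top \nabla(v\circ \Psi)$, and every part of the lemma is an instance of this identity applied with $\Psi = \Phi$ or $\Psi = T_\eps$.

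For part (a), I would first establish the pointwise identity $\nabla(v\circ\Phi)(x) = \partial\Phi(x)^\top (\nabla^M v)(\Phi(x))$: for smooth $v$, the chain rule applied to any $\xi \in \VR^{d-1}$ with associated tangent vector $w := \partial\Phi(x)\xi \in T_{\Phi(x)}M$ gives $\nabla(v\circ\Phi)(x)\cdot \xi = dv(w) = \nabla^M v(\Phi(x))\cdot w$, and I would extend to $v \in H^1(M)$ by density of smooth functions. Since $\nabla^M v(\Phi(x))$ lies in $T_{\Phi(x)}M$, which coincides with the range of $\partial\Phi(x)$, the relation can be left-inverted by $(\partial\Phi^\top \partial\Phi)^{-1}$, yielding $\nabla^M v(\Phi(x)) = \partial\Phi(x)(\partial\Phi^\top\partial\Phi)^{-1}\nabla(v\circ\Phi)(x) = ((\partial\Phi)^\dagger(x))^\top \nabla(v\circ\Phi)(x)$. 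To obtain the second equality I would specialise to $x=0$, using $\partial\Phi(0) = V$ (which follows from $d\exp_q(0) = \text{id}_{T_qM}$) together with the orthonormality $V^\top V = I_{d-1}$, so that $(V^\dagger)^\top = V(V^\top V)^{-1} = V$.

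Parts (b)--(d) then follow routinely. Part (b) is the same pointwise formula applied with $T_\eps$ as parametrization at the point $T_\eps(x)$; the factor $\eps^{-1}$ arises from $\partial T_\eps(x) = \eps\, \partial\Phi(\eps x)$, so that $(\partial T_\eps(x))^\dagger = \eps^{-1}\Psi_\eps(x)$. For part (c) I would invoke the standard change-of-variables identity for surface integrals via $\Phi$, namely $\int_U f\, dx = \int_{\Phi^{-1}(U)}(f\circ \Phi)(y)\sqrt{\det(\partial\Phi(y)^\top\partial\Phi(y))}\,dy$, and then perform the linear rescaling $y = \eps x$, whose Jacobian supplies $\eps^{d-1}$ and transforms the Gram determinant into $g_\eps$. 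Finally, part (d) follows by applying (c) to the integrand $f = \nabla^M u \cdot \nabla^M v$ and using (b) to rewrite both surface gradients in terms of $\Psi_\eps^\top$ and Euclidean gradients, together with the bilinear identity $(\Psi_\eps^\top a)\cdot (\Psi_\eps^\top b) = A_\eps a \cdot b$; the stated form then results from collecting the $\eps$-factors.

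The main obstacle is the pointwise formula in (a), and specifically the care required to left-invert $\partial\Phi$ correctly on its range $T_{\Phi(x)}M$ and to justify extending the identity from smooth functions to $v \in H^1(M)$; once this is in place, (b)--(d) reduce to bookkeeping with the chain rule and with the Jacobian of the rescaling $x \mapsto \eps x$. A minor technical point to check is that $T_\eps$ is a genuine diffeomorphism on a neighborhood of $T_\eps^{-1}(U)$, which is ensured by shrinking $U \Subset U_q$ if necessary so that $T_\eps^{-1}(U)\subset \eps^{-1}B$.
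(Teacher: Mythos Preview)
Your proposal is correct and follows essentially the same approach as the paper's proof: both rely on the general parametrization identity $(\nabla^M v)\circ\Psi = ((\partial\Psi)^\dagger)^\top \nabla(v\circ\Psi)$, the facts $\partial\Phi(0)=V$ and $V^\top V=I_{d-1}$ for (a), the scaling $\partial T_\eps = \eps\,\partial\Phi(\eps\,\cdot)$ for (b), the Gram-determinant change of variables for (c), and the combination of (b) and (c) for (d). Your treatment is simply more explicit than the paper's terse proof, in particular in spelling out the chain-rule inversion argument for (a) and the two-step change of variables for (c).
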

\begin{proof}
    Point (a) follows since $q = \Phi(0)$, $\partial \Phi(0) = V$ and $V^\top V = I_{d-1}$. The result of (b) follows straightforwardly because $(\partial T_\eps^\dagger)^\top = \eps^{-1} (\partial \Phi(\eps x)^\dagger)^\top = \eps^{-1} \Psi_\eps(x)^\top$. Part (c) follows since $\sqrt{\mbox{det}( \partial T_\eps^\top \partial T_\eps )} = \sqrt{\mbox{det}( \eps^2 I_{d-1} \partial \Phi(\eps x)^\top \partial \Phi (\eps x) )} = \eps^{d-1} \sqrt{\mbox{det}( \partial \Phi(\eps x)^\top \partial \Phi (\eps x) )}$. Part (d) follows by combining parts (b) and (c).
\end{proof}

\begin{lemma} \label{lem_Ag_bounded}
    There exist constants $\underline c, \overbar c >0$ and $\tilde \eps > 0$ such that for all $\eps \in (0,\tilde{\eps})$, all $x \in T_\eps^{-1}(U_q)$ and all $v \in \VR^{d-1}$ it holds that
    \begin{align}
        \underline c  \leq& g_\eps(x) \leq \overbar c, \\
        \underline c |v| \leq& | \Psi_\eps(x)^\top v | \leq \overbar c |v|, \label{eq_PsiBounded}\\
        \underline c |v| \leq& | A_\eps(x) v | \leq \overbar c |v|. \label{eq_Abounded}
    \end{align}
\end{lemma}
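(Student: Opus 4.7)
The plan is to pull everything back through the smooth chart $\Phi$ and reduce to continuity and compactness on a fixed, $\eps$-independent compact neighborhood of the origin in $\VR^{d-1}$. First I would observe that for any $\eps>0$ and $x\in T_\eps^{-1}(U_q)$ we have $\Phi(\eps x)=T_\eps(x)\in U_q=\Phi(B)$, so since $\Phi$ is a diffeomorphism on $B$ this forces $\eps x\in B$. Hence it suffices to bound the $\eps$-independent maps
\[
    \tilde g(y):=\sqrt{\det\bigl(\partial\Phi(y)^\top\partial\Phi(y)\bigr)},\qquad \tilde A(y):=\bigl(\partial\Phi(y)^\top\partial\Phi(y)\bigr)^{-1}
\]
on $\overline{B'}$, where $B'\Subset B$ is slightly smaller than $B$ (so that $\overline{B'}\subset B$ is compact) and $\tilde\eps$ is chosen so that $\eps x\in \overline{B'}$ whenever $\eps\in(0,\tilde\eps)$ and $x\in T_\eps^{-1}(U_q')$, with $U_q':=\Phi(B')$; note that since also $A_\eps(x)=\Psi_\eps(x)\Psi_\eps(x)^\top=\bigl((\partial\Phi(\eps x))^\top\partial\Phi(\eps x)\bigr)^{-1}$, we have exactly $g_\eps(x)=\tilde g(\eps x)$ and $A_\eps(x)=\tilde A(\eps x)$.

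Next I would use smoothness of $\exp_q$ and of $E$: since $\Phi=\exp_q\circ E$ is smooth on $B$, the symmetric matrix $G(y):=\partial\Phi(y)^\top\partial\Phi(y)$ depends continuously on $y\in \overline{B'}$, and at $y=0$ it satisfies $G(0)=V^\top V=I_{d-1}$ by Lemma~\ref{lem_trafos}(a), so it is positive definite at the origin. Because $\Phi$ is a diffeomorphism on $B$, $\partial\Phi(y)$ has full column rank throughout $B$, hence $G(y)$ remains symmetric positive definite on all of $\overline{B'}$. Continuity of the determinant and of matrix inversion then yield that $\tilde g$ and $\tilde A$ are continuous $\overline{B'}\to(0,\infty)$ and $\overline{B'}\to\VR^{(d-1)\times(d-1)}$ respectively, and compactness of $\overline{B'}$ gives strictly positive extrema of $\tilde g$ and of the smallest/largest eigenvalues of $\tilde A$. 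Selecting $\underline c,\overbar c>0$ from these extrema, the inequalities for $g_\eps$ and the bound $\underline c|v|\le |A_\eps(x)v|\le \overbar c|v|$ follow from the spectral theorem.

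Finally, for the bound on $\Psi_\eps^\top$ I would use the identity
\[
    |\Psi_\eps(x)^\top v|^2 = v^\top \Psi_\eps(x)\Psi_\eps(x)^\top v = v^\top A_\eps(x)\,v,
\]
so that the spectral bounds on $A_\eps(x)$ translate, upon taking square roots, into the required two-sided inequality \eqref{eq_PsiBounded} (with the constants relabelled as $\sqrt{\underline c}$ and $\sqrt{\overbar c}$, or rechosen to unify notation).

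I do not anticipate any real obstacle here. The only subtlety is that one must carry out the argument on a compact set that does not shrink with $\eps$; this is built in by first pulling back through $\Phi$, which turns the geometric statement on the manifold into a purely Euclidean statement about a smooth invertible matrix-valued function on a fixed compact ball.
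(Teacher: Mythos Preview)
Your approach is essentially the same as the paper's: both argue by continuity from the values $g_0=1$, $\Psi_0=V^\top$, $A_0=I_{d-1}$ at $\eps=0$, with you making the compactness argument explicit where the paper simply invokes smoothness of $\exp_q$. One small wrinkle: by passing to $U_q'=\Phi(B')$ you only obtain the bounds for $x\in T_\eps^{-1}(U_q')$, not for $x\in T_\eps^{-1}(U_q)$ as the lemma asserts (note that $\eps x$ ranges over all of $B$ regardless of how small $\eps$ is, so no choice of $\tilde\eps$ forces $\eps x\in\overline{B'}$). The fix is immediate: since $\exp_q$ is smooth on a ball strictly larger than $B_\delta(0)$, the maps $\tilde g$ and $\tilde A$ extend continuously to the compact set $\overline B$, and you can run the same argument there without shrinking; with that adjustment $\tilde\eps$ can in fact be taken arbitrary.
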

\begin{proof}
    Since $\partial \Phi(0) = V$ and $V^\top V = I_{d-1}$, it holds for $\eps = 0$ that $g_0(x) = \sqrt{\mbox{det}(\partial \Phi(0)^\top \partial \Phi(0))} = 1$ independent of $x$. Since $\exp_q$ is smooth, there exists a constant $\tilde \eps$ such that $g_\eps(x)$ remains bounded and positive for all $\eps \in (0, \tilde \eps)$. Similarly, the smoothness of $\Phi$ also yields \eqref{eq_PsiBounded} and \eqref{eq_Abounded} since we have that $\Psi_0(x) =V^\top$ and $A_0(x) = V^\top V = I_{d-1}$.
\end{proof}

We further need the following well-known result: 
\begin{lemma}\label{L:geodesic_ball}
    We have 
    \ben\label{eq:expansion_omega_eps}
    |\omega_\eps| = |\omega| \eps^{d-1} + o(\eps^{d-1}), 
    \een
    where  $|\omega|:= \text{vol}_{d-1}(\omega)$ denotes the $d-1$ dimensional Lebesgue measure of $\omega$. 
\end{lemma}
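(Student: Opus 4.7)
The plan is to apply the change-of-variables formula from Lemma~\ref{lem_trafos}(c) with $U=\omega_\eps=T_\eps(\omega)$ and $f\equiv 1$. This gives
\[
|\omega_\eps| \;=\; \int_{\omega_\eps} 1 \, dx \;=\; \eps^{d-1}\int_{\omega} g_\eps(x)\, dx,
\]
so the task reduces to showing that $\int_{\omega} g_\eps(x)\, dx \to |\omega|$ as $\eps\searrow 0$, with a quantitative remainder of order $\eps$.

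To handle the limit, I would use the observation made inside the proof of Lemma~\ref{lem_Ag_bounded}: since $\partial\Phi(0)=V$ and $V^\top V = I_{d-1}$, one has $g_0(x)=\sqrt{\det(\partial\Phi(0)^\top\partial\Phi(0))}=1$ for every $x$. Because $M$ is smooth, $\Phi=\exp_q\circ E$ is smooth on a neighborhood of $0$, hence the map $\eps\mapsto g_\eps(x)$ is smooth (and, on the compact closure of $\omega$, $g_\eps$ converges to $g_0\equiv 1$ uniformly in $x$ as $\eps\to 0$). In particular there exists a constant $C>0$ and $\tilde\eps>0$ such that
\[
\bigl|g_\eps(x)-1\bigr| \;\le\; C\eps \qquad \text{for all } x\in\overline\omega, \ \eps\in(0,\tilde\eps).
\]

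Combining these ingredients:
\[
\bigl| |\omega_\eps| - \eps^{d-1}|\omega|\bigr| \;=\; \eps^{d-1}\left|\int_\omega \bigl(g_\eps(x)-1\bigr)\, dx\right| \;\le\; C\,|\omega|\,\eps^{d},
\]
which yields $|\omega_\eps|=|\omega|\eps^{d-1}+O(\eps^d)=|\omega|\eps^{d-1}+o(\eps^{d-1})$, proving the claim.

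The only potentially subtle point is justifying that $\omega\subset E^{-1}(B_\delta(0))$ for $\eps$ small enough, so that $T_\eps$ is a diffeomorphism on the support and Lemma~\ref{lem_trafos}(c) applies; this is immediate since $\omega$ is bounded and $\eps\omega\subset E^{-1}(B_\delta(0))$ for all sufficiently small $\eps$. No new tools beyond Lemma~\ref{lem_trafos} and the smoothness argument already used in Lemma~\ref{lem_Ag_bounded} are required.
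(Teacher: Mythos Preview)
Your proof is correct and follows essentially the same approach as the paper: both compute $|\omega_\eps| = \eps^{d-1}\int_\omega g_\eps(x)\,dx$ via the change of variables $T_\eps$, observe that $g_0\equiv 1$, and conclude by a Taylor expansion/smoothness argument. Your version is slightly more explicit in that you give the quantitative remainder $O(\eps^d)$ and spell out why $T_\eps$ is a diffeomorphism on $\omega$ for small $\eps$, but the underlying idea is identical.
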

\begin{proof}
    Since for all small $\eps \ge 0$ the map $x\mapsto  \exp_q(\eps Vx)$ is a diffeomorphism from $\omega$ onto $\omega_\eps$ we compute:
\begin{equation}
    |\omega_\eps| = \eps^{d-1}\int_\omega g_\eps(x)\;dx
\end{equation}
with $g_\eps$ defined as in \eqref{eq:g_eps}. In view of $g_0(x) = 1$ for all $x\in \omega$ the result follows from a Taylor expansion of $\eps \mapsto g_\eps(x)$ around $\eps =0$. 
\end{proof}

\begin{remark}
    For the case $\omega = B_1(0)$, the more general situation of Riemannian manifolds of arbitrary dimension is considered in \cite[Thm. 3.1]{a_Gr_1974a}. There, also the explicit expression of higher order terms, that means, the terms corresponding to $o(\eps^{d-1})$ in \eqref{eq:expansion_omega_eps}, are derived.
\end{remark}

From now on, we will consider only small values of $\eps \in [0, \tilde \eps)$ with $0 <\tilde \eps < \overbar \delta$ according to Lemma~\ref{lem_Ag_bounded}.

\section{Derivation of the topological derivative}\label{sec:topo_deriv}
In this section we consider the surface topology optimisation problem \eqref{eq:J_intro}--\eqref{eq:state_intro} with a tracking-type cost function $J$. More precisely, we consider the problem
\begin{align} \label{eq_J_alpha12}
\underset{(\Omega,u)}{\mbox{min }} J(u) :=& \alpha_1 \int_M |u-u_d|^2 \; \mbox dx +  \alpha_2 \int_M |\nabla^M(u-u_d)|^2 \; \mbox dx \\
\mbox{s. t. } u \in H^1(M): 
\int_M \beta_{\Omega}& \nabla^M u \cdot \nabla^M v + \gamma_\Omega uv \;dx = \int_M f_\Omega v\;dx \qquad \mbox{for all } v \in H^1(M). \label{eq_PDEConst}
\end{align}
where $u_d \in H^1(M)$ and $\alpha_1, \alpha_2 \geq 0$. Here, $\Omega$ is sought in a set of admissible open subsets $\mathcal A$, which is a subset of the power set $\mathcal P(M)$ of $M$, $\mathcal A \subset  \mathcal P(M)$. The coefficients $\beta_\Omega$, $\gamma_\Omega$ and $f_\Omega$ are as defined in \eqref{eq_defBetaOmega}. The adjoint equation associated to problem \eqref{eq_J_alpha12}--\eqref{eq_PDEConst} is to find $p_0 \in H^1(M)$ such that
\ben \label{eq_AdjEqn}
        \int_M \beta_{\Omega} \nabla^M p_0 \cdot \nabla^M v + \gamma_{\Omega} p_0 v \;dx = - 2\alpha_1 \int_M (u_0 - u_d) v \;dx - 2\alpha_2 \int_M \nabla^M ( u_0 -  u_d) \cdot \nabla^M v \;dx 
    \een
    for all $v \in H^1(M)$.

Recall the notation $\Cj(\Omega):= J(u(\Omega))$ for the reduced cost function where $u(\Omega)$ is the unique solution to \eqref{eq_PDEConst} for a given admissible set $\Omega\subset M$. Using the results of the previous section we derive the first order topological derivative of 
$\Cj$. 
\begin{definition}
    Let $\Cj$ be a shape function defined on a subset of $\Cp(M)$. Let $ \omega\subset \VR^{d-1}$ be an inclusion containing the origin and define $\omega_\eps:=\omega_\eps(q) := \exp_q(E(\eps \omega)) \subset M$. Let $\Omega\subset M$ be open with respect to the subspace topology.  We define the topological derivative of $\Cj$ at $\Omega$ with respect to the inclusion $\omega$ by 
\ben \label{eq_defTD}
d\Cj(\Omega)(q):= \left\{\begin{array}{ll}
        \lim_{\eps\searrow0} \frac{\Cj(\Omega\setminus \overline{ \omega}_\eps(q))- \Cj(\Omega)}{|\omega_\eps(q)|} & \text{ for } q\in \Omega,\\  
        \lim_{\eps\searrow0} \frac{\Cj(\Omega\cup \omega_\eps(q))- \Cj(\Omega)}{|\omega_\eps(q)|} & \text{ for } q\in M\setminus \overline \Omega.
\end{array}\right.
\een
\end{definition}

We will focus on the case where $q \in M \setminus \overline \Omega$ and remark that the other case where $q \in \Omega$ can be treated analgously by interchanging the roles of $\beta_1, \gamma_1, f_1$ and $\beta_2, \gamma_2, f_2$, respectively. For $q \in M \setminus \overline \Omega$, let $\Omega_\eps := \Omega \cup \omega_\eps$ denote the perturbed set.

The goal of this section is to prove the following result.
\begin{theorem} \label{thm_TD}
    Let $\omega\subset \VR^{d-1}$ be a bounded connected domain containing the origin and let an open set $\Omega\subset M$ and $q \in M \setminus \overbar \Omega$ be given. Let $u_0\in H^1(M)$ denote the solution to the state equation \eqref{eq_PDEConst} and $p_0\in H^1(M)$ the solution to the adjoint equation \eqref{eq_AdjEqn}. Assume further that there exists $\bar \delta > 0$ such that $u_0, p_0 \in C^1(B_{\bar \delta}(q) \cap M)$.
        
    Then the topological derivative of $\Cj$ defined by \eqref{eq_J_alpha12}--\eqref{eq_PDEConst} at a point $q \in M \setminus \overbar \Omega$ is given by 
    \ben
    d\Cj(\Omega)(q) = \partial_{\ell}G(0,u_0,p_0) + R(u_0,p_0) ,
    \een
    where
    \begin{align}
        \partial_{\ell}G(0, u_0, p_0) & =  (\beta_1-\beta_2) \nabla^M u_0(q)\cdot \nabla^M p_0(q) + (\gamma_1-\gamma_2) u_0(q) p_0(q) - (f_1-f_2) p_0(q) \\
        R(u_0,p_0)  & = (\beta_1-\beta_2) \frac{1}{|\omega|} \int_\omega V^\top \nabla^M u_0(q)\cdot \nabla Q(x)\;dx
    \end{align}
    and $Q\in \dot{BL}(\VR^{d-1})$ is the solution to
    \ben
    \int_{\VR^{d-1}} \beta_\omega  \nabla Q(x)\cdot \nabla v(x)\;dx =- (\beta_1-\beta_2) \int_\omega V^\top \nabla^M p_0(q)\cdot \nabla v(x)\;dx  - \alpha_2 \int_{\VR^{d-1}} \nabla K(x) \cdot \nabla v(x)\; dx,
    \een
    for all $v \in \dot{BL}(\VR^{d-1})$. Here, $K\in \dot{BL}(\VR^{d-1})$ is the solution to
    \ben
        \int_{\VR^{d-1}} \beta_\omega \nabla K(x)\cdot \nabla v(x)\;dx = -(\beta_1-\beta_2)\int_\omega V^\top \nabla^M u_0(q)\cdot \nabla v(x)\;dx \quad \text{ for all } v \in \dot{BL}(\VR^{d-1}).
    \een
\end{theorem}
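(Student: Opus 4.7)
The plan is to apply the Lagrangian / averaged adjoint framework. Introduce
\begin{align*}
G(\eps, u, p) := J(u) + \int_M \bigl(\beta_{\Omega_\eps}\nabla^M u\cdot \nabla^M p + \gamma_{\Omega_\eps}u p - f_{\Omega_\eps}p\bigr)\,dx,
\end{align*}
and the averaged adjoint $p_\eps \in H^1(M)$ defined by $\int_0^1 \partial_u G(\eps, s u_\eps + (1-s)u_0, p_\eps)\,ds = 0$. Since $G(\eps,u_\eps,\cdot) = \Cj(\Omega_\eps)$ and $G(0,u_0,\cdot)=\Cj(\Omega)$, and the choice of $p_\eps$ kills the state-variation term, one obtains the classical identity
\begin{align*}
\Cj(\Omega_\eps)-\Cj(\Omega) &= G(\eps,u_0,p_\eps)-G(0,u_0,p_\eps) \\
&= \int_{\omega_\eps}\!\!\bigl[(\beta_1-\beta_2)\nabla^M u_0\cdot \nabla^M p_\eps + (\gamma_1-\gamma_2)u_0 p_\eps - (f_1-f_2)p_\eps\bigr]\,dx,
\end{align*}
the reduction to $\omega_\eps$ following from $q \in M\setminus\overline\Omega$. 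Dividing by $|\omega_\eps|$ and pulling back to $\omega\subset \VR^{d-1}$ via $T_\eps$ (using Lemma~\ref{lem_trafos} and Lemma~\ref{L:geodesic_ball}) yields a factor $\eps^{d-1}g_\eps/|\omega_\eps|\to 1/|\omega|$ in front of the integrand.

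The core step is the asymptotic analysis of the rescaled state and adjoint correctors
\[U_\eps(x):=\frac{(u_\eps-u_0)\circ T_\eps(x)}{\eps}, \qquad P_\eps(x):=\frac{(p_\eps-p_0)\circ T_\eps(x)}{\eps}\]
on the rescaled domain $D_\eps:=T_\eps^{-1}(U_q)$, which exhausts $\VR^{d-1}$ as $\eps\to 0$. Subtracting \eqref{eq_PDEConst} from \eqref{eq:state_per}, localizing to $U_q$, and applying Lemma~\ref{lem_trafos}(b)--(d) gives
\begin{align*}
\int_{D_\eps}\tilde\beta_\eps\, A_\eps \nabla U_\eps\cdot \nabla\varphi\, g_\eps\,dx = -(\beta_1-\beta_2)\int_\omega A_\eps \nabla(u_0\circ T_\eps)\cdot \nabla\varphi\, g_\eps\,dx + O(\eps),
\end{align*}
with $\tilde\beta_\eps := \beta_{\Omega_\eps}\circ T_\eps = \beta_1\chi_\omega + \beta_2\chi_{D_\eps\setminus \omega}$, and an analogous equation for $P_\eps$ which inherits an extra source from the $\alpha_2\int \nabla^M(u-u_d)\cdot \nabla^M v$ term in \eqref{eq_AdjEqn}. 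Lemma~\ref{lem_Ag_bounded} provides $\eps$-uniform ellipticity, while Lemma~\ref{lem_trafos}(a) together with the $C^1$ hypothesis gives $\nabla(u_0\circ T_\eps)\to V^\top\nabla^M u_0(q)$ pointwise on $\omega$. Standard energy estimates in the homogeneous Beppo--Levi space $\dot{BL}(\VR^{d-1})$, followed by weak compactness and uniqueness of the limit problems, then yield $U_\eps \rightharpoonup K$ and $P_\eps \rightharpoonup Q$ in the sense claimed.

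Passage to the limit in the volume-averaged quotient is then straightforward. The $\gamma$- and $f$-parts contribute $(\gamma_1-\gamma_2)u_0(q)p_0(q)-(f_1-f_2)p_0(q)$ by uniform continuity. For the gradient part, the decomposition $\nabla(p_\eps\circ T_\eps) = \nabla(p_0\circ T_\eps) + \eps\nabla P_\eps$ separates a frozen contribution $(\beta_1-\beta_2)\nabla^M u_0(q)\cdot \nabla^M p_0(q)$ from the corrector contribution $(\beta_1-\beta_2)(1/|\omega|)\int_\omega V^\top\nabla^M u_0(q)\cdot \nabla Q\,dx = R(u_0,p_0)$, via weak convergence of $\nabla P_\eps$. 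The main obstacle will be the compactness argument: one must establish $\eps$-uniform Beppo--Levi estimates on $U_\eps, P_\eps$, justify the limit in the $\eps$-dependent coefficients $\tilde\beta_\eps, A_\eps, g_\eps$, and track the coupling of the corrector equations. In particular, the $\alpha_2$ term in \eqref{eq_AdjEqn} produces the source $-\alpha_2\int \nabla K\cdot \nabla v$ in the equation for $Q$, which is precisely how the state corrector $K$ enters the statement even though the cost itself contributes no direct "$K$-part" to $R$.
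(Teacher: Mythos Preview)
Your proposal is correct and follows essentially the same averaged-adjoint strategy as the paper: the same Lagrangian $G$, the same rescaled correctors (the paper calls them $K_\eps,Q_\eps$ and defines them via a Sobolev extension rather than on the exhausting domains $D_\eps$), the same a~priori estimates from Lemma~\ref{lem_Ag_bounded}, and the same weak compactness argument in $\dot{BL}(\VR^{d-1})$ leading to the limit problems for $K$ and $Q$. The only cosmetic difference is that the paper splits $G(\eps,u_0,p_\eps)-G(0,u_0,p_0)$ into $[G(\eps,u_0,p_\eps)-G(\eps,u_0,p_0)]+[G(\eps,u_0,p_0)-G(0,u_0,p_0)]$ and then invokes the unperturbed state equation to localize the first bracket to $\omega_\eps$, whereas you localize directly via $G(\eps,u_0,p_\eps)-G(0,u_0,p_\eps)$ and split $p_\eps=p_0+(p_\eps-p_0)$ afterwards; the outcome is identical.
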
 

\begin{remark}
    We will see in Section \ref{sec_Qexpl} that, for the case $\alpha_2=0$ and $\omega = B_1(0)$ the unit ball, the function $Q$ can be computed explicitly and we get a closed form for the topological derivative which is independent of the choice of the basis $V$.
\end{remark}

In order to prove Theorem \ref{thm_TD} we apply the averaged adjoint framework introduced in \cite{Sturm2019} to the setting $X=Y=H^1(M)$, $\ell(\eps):= |\omega_\eps|$ and 
\ben \label{eq_defG}
\begin{split}
 G(\eps,\varphi,\psi) =& \alpha_1 \int_M |\varphi - u_d|^2 \; \mbox dx  +\alpha_2 \int_M |\nabla^M(\varphi - u_d)|^2 \; \mbox dx  \\
&+ \int_M \beta_{\Omega_\eps} \nabla^M \varphi \cdot \nabla^M\psi + \gamma_{\Omega_\eps} \varphi \psi - f_{\Omega_\eps} \psi\;dx.
\end{split}
\een

\subsection{Variation of the state}
In this section, we examine the difference between the solution $u_\eps$ of the perturbed state equation \eqref{eq:state_per} with $\eps > 0$ and the solution $u_0$ to the unperturbed state equation \eqref{eq_PDEConst}.

\begin{lemma}\label{lem:pert_state_apriori}
There is a constant $C>0$ such that for all $\eps \in (0, \tilde \eps)$,
\ben \label{eq_pert_state_apriori}
\|u_\eps - u_0\|_{H^1(M)} \le C \eps^{(d-1)/2}.
\een
\end{lemma}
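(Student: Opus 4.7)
The plan is to obtain the estimate by a standard energy argument: subtract the two weak formulations, test with the difference $u_\eps - u_0$, and control the right-hand side using the fact that the perturbation is localised on the tiny set $\omega_\eps$ together with the pointwise regularity of $u_0$ near $q$.

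More precisely, first I would subtract the weak formulation of \eqref{eq_PDEConst} with $\Omega = \Omega$ from that of \eqref{eq:state_per} to obtain, for every $\varphi \in H^1(M)$,
\ben
\int_M \beta_{\Omega_\eps} \nabla^M(u_\eps - u_0) \cdot \nabla^M \varphi + \gamma_{\Omega_\eps} (u_\eps - u_0)\varphi \, dx = \int_M (\beta_\Omega - \beta_{\Omega_\eps}) \nabla^M u_0 \cdot \nabla^M \varphi + (\gamma_\Omega - \gamma_{\Omega_\eps}) u_0 \varphi - (f_\Omega - f_{\Omega_\eps}) \varphi \, dx.
\een
For $\eps$ small enough so that $\omega_\eps \subset M \setminus \overline{\Omega}$, the coefficient differences $\beta_\Omega - \beta_{\Omega_\eps}$, $\gamma_\Omega - \gamma_{\Omega_\eps}$ and $f_\Omega - f_{\Omega_\eps}$ are supported in $\omega_\eps$ and equal the constants $\beta_2 - \beta_1$, $\gamma_2 - \gamma_1$ and $f_2 - f_1$ there, respectively. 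The left-hand side is the standard coercive bilinear form associated with the state equation, so testing with $\varphi = u_\eps - u_0$ and using the uniform positivity of $\beta_{\Omega_\eps}, \gamma_{\Omega_\eps}$ bounds it below by $c \|u_\eps - u_0\|_{H^1(M)}^2$.

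For the right-hand side, I would exploit the $C^1$-regularity of $u_0$ on $B_{\bar{\delta}}(q)\cap M$ (assumed in the hypotheses of Theorem~\ref{thm_TD}) to get $\|u_0\|_{L^\infty(\omega_\eps)} + \|\nabla^M u_0\|_{L^\infty(\omega_\eps)} \le C$ uniformly for small $\eps$. Then Cauchy--Schwarz gives, for every term on the right-hand side,
\ben
\left| \int_{\omega_\eps} \nabla^M u_0 \cdot \nabla^M \varphi \, dx \right| \le C |\omega_\eps|^{1/2} \|\varphi\|_{H^1(M)},
\een
and analogous estimates for the $\gamma$ and $f$ contributions. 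Invoking Lemma~\ref{L:geodesic_ball} to control $|\omega_\eps|^{1/2} \le C \eps^{(d-1)/2}$ yields
\ben
c \|u_\eps - u_0\|_{H^1(M)}^2 \le C \eps^{(d-1)/2} \|u_\eps - u_0\|_{H^1(M)},
\een
from which the claim follows by division.

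I do not expect serious obstacles in this argument; it is a classical localisation-plus-coercivity estimate. The only point requiring some care is ensuring that $\omega_\eps$ is contained in the coordinate neighbourhood $U_q$ where the $C^1$ bounds on $u_0$ apply; this is guaranteed once $\eps < \tilde\eps$ with $\tilde\eps$ chosen as in Lemma~\ref{lem_Ag_bounded} and smaller than $\bar{\delta}/\mathrm{diam}(\omega)$ if necessary. Everything else is routine.
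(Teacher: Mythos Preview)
Your proposal is correct and is essentially identical to the paper's own proof: subtract the perturbed and unperturbed weak formulations, test with $u_\eps - u_0$, use coercivity on the left and the $C^1$-bound on $u_0$ together with Cauchy--Schwarz on the right to pull out a factor $|\omega_\eps|^{1/2}$, and then apply Lemma~\ref{L:geodesic_ball}. The paper organises the right-hand side slightly differently (writing the three $\omega_\eps$-integrals directly rather than via $\beta_\Omega-\beta_{\Omega_\eps}$ etc.), but the argument is the same.
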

\begin{proof}
Subtracting \eqref{eq:state_per} with $\eps = 0$ from that same equation with $\eps >0$, we obtain
\begin{align} \begin{aligned}\label{eq:state_pert_diff}
    \int_M\beta_{\Omega_\eps} \nabla^M (u_\eps-u_0) \cdot \nabla^M v   +& \gamma_{\Omega_\eps} (u_\eps- u_0) v \;dx=\int_{\omega_\eps}(f_1 -f_2)v \; dx \\ 
    & - (\beta_1 - \beta_2)\int_{\omega_\eps} \nabla^M u_0\cdot \nabla^M v\;dx 
    - (\gamma_1 - \gamma_2)\int_{\omega_\eps}  u_0  v\;dx 
\end{aligned}\end{align}
for all $v \in H^1(M)$. Hence testing with $v = u_\eps - u_0$, using the ellipticity of the left hand side, H\"older's inequality and that $u_0$ is continuously differentiable near $q$ yield
\ben
\|u_\eps - u_0\|_{H^1(M)} \le C \sqrt{|\omega_\eps|} \left( \|u_0\|_{C(B_{\bar \delta}(q)\cap M)} + \|\nabla^M u_0\|_{C(B_{\bar \delta}(q)\cap M)^d} \right),
\een
where $\overbar\delta >0$ is sufficiently small and $B_{\overbar\delta}(q)$ denotes the open ball in $\VR^d$ of radius 
$\overbar{\delta}$ centered at $q$. Now the result follows from $|\omega_\eps| = |\omega|\eps^{d-1} + o(\eps^{d-1})$ (see Lemma~\ref{L:geodesic_ball}). 
\end{proof}

In the following, we denote by $R: H^1(B) \rightarrow H^1(\VR^{d-1})$ the standard continuous Sobolev extension operator. 
\begin{definition} \label{def_Keps}
    For $\eps \in [0, \tilde \eps)$ we define the extension $\tilde u_\eps := R(u_\eps \circ \exp_q \circ E)$ and for $\eps \in (0, \tilde \eps)$ we define the variation of $u_\eps$ by 
\ben
K_\eps(x) := \left( \frac{\tilde u_\eps - \tilde u_0}{\eps} \right)(\eps x), \qquad x \in \VR^{d-1}.
\een
Notice that $K_\eps \in \dot{BL}(\VR^{d-1})$. 
\end{definition}
By changing variables in \eqref{eq_pert_state_apriori} and exploiting the boundedness stated in Lemma \ref{lem_Ag_bounded}, we obtain the following result:
\begin{corollary} \label{cor_Keps_bounded}
    There exists a constant $C>0$ such that for all $\eps \in (0, \tilde{\eps})$ it holds
    \begin{align}
        \int_{\VR^{d-1}} (\eps K_\eps)^2 + |\nabla K_\eps|^2 \, dx \leq C.
    \end{align}
\end{corollary}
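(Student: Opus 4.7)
The plan is to reduce the desired estimate to the a priori bound of Lemma \ref{lem:pert_state_apriori} via a single rescaling $y = \eps x$. First, I would exploit linearity of the Sobolev extension operator $R$ to write
\begin{align*}
\tilde u_\eps - \tilde u_0 = R\bigl((u_\eps - u_0) \circ \exp_q \circ E\bigr).
\end{align*}
Since $\exp_q \circ E$ is a smooth diffeomorphism from $B$ onto $U_q$, with Jacobian and its inverse bounded independently of $\eps$ (this is essentially the $\eps = 0$ content of Lemma \ref{lem_Ag_bounded}, or follows directly from smoothness of $\exp_q$), the pullback $v \mapsto v \circ \exp_q \circ E$ is bounded from $H^1(U_q)$ to $H^1(B)$. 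Combined with continuity of $R \colon H^1(B) \to H^1(\VR^{d-1})$, this gives an $\eps$-independent constant $C_R$ with
\begin{align*}
\|\tilde u_\eps - \tilde u_0\|_{H^1(\VR^{d-1})} \leq C_R \|u_\eps - u_0\|_{H^1(M)} \leq C \eps^{(d-1)/2},
\end{align*}
where the second inequality is precisely Lemma \ref{lem:pert_state_apriori}.

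The second step is a direct scaling computation. From $K_\eps(x) = \eps^{-1}(\tilde u_\eps - \tilde u_0)(\eps x)$, the chain rule yields $\eps K_\eps(x) = (\tilde u_\eps - \tilde u_0)(\eps x)$ and $\nabla K_\eps(x) = (\nabla(\tilde u_\eps - \tilde u_0))(\eps x)$. The change of variables $y = \eps x$ with Jacobian $\eps^{d-1}$ then gives
\begin{align*}
\int_{\VR^{d-1}} (\eps K_\eps(x))^2 + |\nabla K_\eps(x)|^2 \, dx = \eps^{-(d-1)} \|\tilde u_\eps - \tilde u_0\|_{H^1(\VR^{d-1})}^2.
\end{align*}
Inserting the previous bound, the factor $\eps^{d-1}$ from the a priori estimate cancels exactly with the Jacobian factor $\eps^{-(d-1)}$, leaving a constant that is independent of $\eps \in (0, \tilde \eps)$.

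I do not anticipate a real obstacle here. The key observation worth highlighting is that the prefactor $1/\eps$ in the definition of $K_\eps$ is tuned precisely so that the $H^1$-scaling of the state perturbation on $M$ balances the blow-up Jacobian on $\VR^{d-1}$. This balance also explains why the corollary controls only $\eps K_\eps$ (and not $K_\eps$ itself) in $L^2(\VR^{d-1})$: without the extra factor $\eps$, the $L^2$ piece would scale as $\eps^{-2}$ times the manifold $L^2$-estimate, which is not bounded uniformly in $\eps$.
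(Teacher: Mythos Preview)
Your argument is correct and rests on the same ingredients as the paper's proof: the a priori estimate of Lemma~\ref{lem:pert_state_apriori} together with a change of variables. The organization differs slightly. The paper performs the change of variables in one step via $T_\eps$, which lands the integral on $\eps^{-1}B$ and brings in the $\eps$-dependent quantities $g_\eps$ and $\Psi_\eps$; Lemma~\ref{lem_Ag_bounded} is then invoked to bound these uniformly. You instead factor the transformation into the $\eps$-independent pullback by $\exp_q\circ E$ (followed by the extension $R$) and the pure Euclidean dilation $y=\eps x$. This has two small advantages: it avoids any appeal to the $\eps$-dependent bounds of Lemma~\ref{lem_Ag_bounded}, and it directly yields the estimate on all of $\VR^{d-1}$, whereas the paper's change of variables literally produces only the bound on $\eps^{-1}B$ and leaves the contribution of the extension outside $\eps^{-1}B$ implicit. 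In short, the two proofs are the same in spirit; yours is a touch more self-contained.
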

\begin{proof}
    From \eqref{eq_pert_state_apriori} it follows that $\|u_\eps - u_0\|^2_{H^1(U_q)} \leq \|u_\eps - u_0\|_{H^1(M)}^2 \leq C \eps^{d-1}$ and thus, noting that $T_\eps^{-1}(U_q) = \eps^{-1}B$ and using Lemma \ref{lem_trafos}, a change of variable yields
    \begin{align}
        \int_{\eps^{-1}B} \eps^{d-1}g_\eps | \Psi_\eps^\top \nabla K_\eps|^2 + \eps^{d-1}g_\eps (\eps K_\eps)^2 \; dx \leq C\eps^{d-1}.
    \end{align}
    Dividing by $\eps^{d-1}$, Lemma \ref{lem_Ag_bounded} yields the assertion.

\end{proof}

The following result will be crucial for analysing the variation of the averaged adjoint states in Section \ref{sec_anaAdj}.
\begin{lemma} \label{thm_Keps_K}
    We have
    \begin{align}
        \nabla K_\eps \tow& \nabla K \quad \mbox{ in } L_2(\VR^{d-1})^{d-1},\\
        \eps K_\eps \tow& 0 \quad \, \quad \mbox{ in } L_2(\VR^{d-1}) , \label{eq_Keps_tow_0}
    \end{align}
    where $K \in  \dot{BL}(\VR^{d-1})$ is the unique solution to
    \ben\label{eq:limit_Keps_1}
        \int_{\VR^{d-1}} \beta_\omega \nabla K\cdot \nabla v\;dx = -(\beta_1-\beta_2)\int_\omega \nabla \tilde u_0(0)\cdot \nabla v\;dx \quad \text{ for all } v \in \dot{BL}(\VR^{d-1}),
    \een
where $\dot{BL}(\VR^d) := BL(\VR^d)/\VR$ is the so-called \emph{Beppo-Levi space} \cite{a_DELI_1955a,a_ORSU_2012a} and $BL(\VR^d) :=  \{u\in H^1_{\text{loc}}(\VR^d):\; \nabla u \in L_2(\VR^d)^d\}$ with $/\VR$ meaning that we quotient out constants. The norm on this space is defined by $\|[u]\|_{ \dot{BL}_p(\VR^d) } := \|\nabla u\|_{ L_p(\VR^d)^d}, u\in [u]$.
\end{lemma}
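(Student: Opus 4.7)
The plan is to combine compactness with a careful identification of the limit equation. First, from Corollary~\ref{cor_Keps_bounded} I would extract subsequential weak limits $\nabla K_\eps \rightharpoonup g$ in $L_2(\VR^{d-1})^{d-1}$ and $\eps K_\eps \rightharpoonup h$ in $L_2(\VR^{d-1})$. Identifying $h$ is quick: since $\|\nabla K_\eps\|_{L_2}$ is bounded and $\eps\to 0$, the product $\eps\nabla K_\eps$ converges strongly to $0$ in $L_2$, so $\nabla h = 0$ distributionally; hence $h$ is constant on the connected set $\VR^{d-1}$, and the only constant lying in $L_2(\VR^{d-1})$ is $0$. Since the distributional curl of $\nabla K_\eps$ vanishes and this property survives weak convergence, the Poincar\'e lemma gives $g = \nabla K$ for some $K \in H^1_{\mathrm{loc}}(\VR^{d-1})$ with $\nabla K\in L_2(\VR^{d-1})^{d-1}$, so $K\in \dot{BL}(\VR^{d-1})$.

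To pin down $K$, I would derive the equation satisfied by $K_\eps$ and pass to the limit. For an arbitrary $\phi\in C_c^\infty(\VR^{d-1})$, define $v_\eps\in H^1(M)$ by $v_\eps(y):=\phi(T_\eps^{-1}(y))$ on $T_\eps(\mathrm{supp}\,\phi)$ and by $0$ elsewhere; for sufficiently small $\eps$ we have $T_\eps(\mathrm{supp}\,\phi)\Subset U_q$, so $v_\eps$ is well-defined. Testing \eqref{eq:state_pert_diff} with $v_\eps$, using the identity $(u_\eps-u_0)\circ T_\eps = \eps K_\eps$, and changing variables via Lemma~\ref{lem_trafos} converts each of the five integrals into one over $\mathrm{supp}\,\phi$ with explicit $\eps$-powers: the gradient-type contributions (the LHS gradient term and the RHS $\beta$-source term) share the dominant scale, while the LHS mass term and the RHS $f$- and $\gamma$-source terms carry additional positive powers of $\eps$.

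Dividing through by the dominant scale and letting $\eps\searrow 0$, three ingredients drive the convergence: (i) the weak convergence $\nabla K_\eps\rightharpoonup \nabla K$ combined with the uniform convergence $g_\eps\to 1$ and $A_\eps\to I_{d-1}$ on $\mathrm{supp}\,\phi$ (Lemma~\ref{lem_Ag_bounded}, using $V^\top V = I_{d-1}$ and smoothness of $\exp_q$), yielding $\int g_\eps A_\eps \beta_\omega \nabla K_\eps\cdot \nabla\phi\,dx \to \int \beta_\omega \nabla K\cdot \nabla\phi\,dx$; (ii) the $C^1$-regularity of $u_0$ near $q$ assumed in Theorem~\ref{thm_TD} gives $\nabla\tilde u_0(\eps\cdot) \to \nabla\tilde u_0(0)$ uniformly on $\omega$, so the $\beta$-source converges to $-(\beta_1-\beta_2)\int_\omega \nabla\tilde u_0(0)\cdot \nabla\phi\,dx$; and (iii) the vanishing of all subleading terms, notably the mass contribution $\int g_\eps \gamma_\omega (\eps K_\eps)\phi\,dx \to 0$ by pairing $\eps K_\eps \rightharpoonup 0$ weakly against the strongly convergent $g_\eps\gamma_\omega\phi$, while the $f$- and $\gamma$-source terms decay because they acquire an explicit prefactor of $\eps$ after rescaling. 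The resulting identity is precisely \eqref{eq:limit_Keps_1} tested with $\phi$; density of $C_c^\infty(\VR^{d-1})$ in $\dot{BL}(\VR^{d-1})$ modulo constants extends it to all admissible test functions. Coercivity of the left-hand side of \eqref{eq:limit_Keps_1} on $\dot{BL}$ via the Beppo-Levi Poincar\'e inequality yields uniqueness of $K$, which upgrades subsequential convergence to convergence of the full sequence. The main technical obstacle is the careful bookkeeping of the distinct $\eps$-scales produced by the change of variables, verifying that after rescaling by the dominant scale both the $f,\gamma$-source terms and the mass term become genuinely subleading, so the limit equation is the clean one in \eqref{eq:limit_Keps_1} rather than one contaminated by extra mass or source contributions.
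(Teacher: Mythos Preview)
Your proposal is correct and follows essentially the same route as the paper: test the difference equation \eqref{eq:state_pert_diff} with rescaled compactly supported functions, change variables via Lemma~\ref{lem_trafos}, use the a priori bound from Corollary~\ref{cor_Keps_bounded} to extract a weak limit, pass to the limit using $g_\eps\to 1$, $A_\eps\to I_{d-1}$, $\Psi_\eps^\top\to V$, and invoke uniqueness of \eqref{eq:limit_Keps_1} to upgrade to full-sequence convergence. Two cosmetic differences are worth noting: the paper extracts the weak limit directly in the Hilbert space $\dot{BL}(\VR^{d-1})$ (so the limit is automatically a gradient, without appeal to the Poincar\'e lemma), and for \eqref{eq_Keps_tow_0} the paper simply cites an external reference, whereas your self-contained argument (the weak limit $h$ of $\eps K_\eps$ satisfies $\nabla h=0$ because $\eps\nabla K_\eps\to 0$ strongly, hence $h$ is a constant in $L_2(\VR^{d-1})$, hence zero) is a clean improvement.
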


\begin{proof}
The proof is similar to the proof of \cite[Thm. 4.14]{Sturm2019}.
Recall that by construction the set $B_\delta(0)  \subset T_q M$ is chosen such that $\exp_q(B_\delta(0)) = U_q \Subset M \setminus \overbar \Omega$. Recall the set $B \subset \VR^{d-1}$ satisfying $E(B) = B_\delta(0)$, see also Figure \ref{fig_trafos}. Let $\tilde{\eps} > 0$ according to Lemma \ref{lem_Ag_bounded} and $\eps \in ( 0, \tilde{\eps})$ and $\bar v \in H^1_0({\tilde{\eps}}^{-1}B)$. For $\eps \in (0, \tilde \eps)$, let $v := \eps \bar v \circ T_\eps^{-1}$ and note that $v \in H^1(M)$.
    Testing \eqref{eq:state_pert_diff} with test functions $v$ of this form, a change of variables yields that
    \begin{align*}  
        \int_{{\tilde{\eps}}^{-1}B}  \beta_\omega g_\eps A_\eps& \nabla K_\eps \cdot \nabla \bar v \; \mbox dx
        + \int_{{\tilde{\eps}}^{-1}B} \gamma_\omega g_\eps \eps^2 K_\eps \bar v\; \mbox dx 
        = \int_\omega \eps g_\eps (f_1-f_2) \bar v \; dx\\
        &- (\beta_1 - \beta_2) \int_{\omega} g_\eps \nabla^M u_0(T_\eps(x)) \cdot (\Psi_\eps^\top \nabla \bar  v)\; \mbox dx - (\gamma_1 - \gamma_2) \int_{\omega}\eps g_\eps u_0(T_\eps(x)) \bar v \; \mbox dx
    \end{align*}
    for all $\bar v \in  H^1_0({\tilde{\eps}}^{-1}B)$. Defining
    \begin{align}
        r_1(\eps, \bar v) &:= - \int_{{\tilde{\eps}}^{-1}B} \gamma_\omega g_\eps \eps K_\eps \bar v\; \mbox dx, \\
        r_2(\eps, \bar v) &:= \int_\omega g_\eps (f_1-f_2) \bar v \; dx, \\
        r_3(\eps, \bar v) &:= - (\gamma_1 - \gamma_2) \int_{\omega} g_\eps u_0(T_\eps(x)) \bar v \; \mbox dx,
    \end{align}
    we get by rearranging
    \begin{align} \begin{aligned} \label{eq_Keps_rearr}
     \int_{{\tilde{\eps}}^{-1}B}  \beta_\omega g_\eps A_\eps& \nabla K_\eps \cdot \nabla \bar v \; \mbox dx 
     + (\beta_1 - \beta_2) \int_{\omega}g_\eps \nabla^M u_0(T_\eps(x)) \cdot (\Psi_\eps^\top \nabla \bar  v)\; \mbox dx
     =\eps \, (r_1(\eps, \bar v)+ r_2(\eps, \bar v)+ r_3(\eps, \bar v)).
    \end{aligned} \end{align}
    Using Lemma \ref{lem_Ag_bounded}, Corollary \ref{cor_Keps_bounded} and the fact that $u_0 \in C(B_{\bar \delta}(q) )$ for some $\bar \delta > \tilde \eps > 0$, H\"older's inequality yields the boundedness of the terms $r_i(\eps, \bar v)$, $i = 1,2,3$ in $L_2(\VR^{d-1})$:
    \begin{align*}
        |r_1(\eps, \bar v)| &\leq C \| \eps K_\eps \|_{L_2(\VR^{d-1})}\| \bar v \|_{L_2(\VR^{d-1})} \leq C \| \bar v \|_{L_2(\VR^{d-1})}, \\
        |r_2(\eps, \bar v)| & \leq C \| \bar v \|_{L_2(\omega)} \leq C \| \bar v \|_{L_2(\VR^{d-1})}, \\
        |r_3(\eps, \bar v)| &\leq C \| u_0 \|_{C(B_{\bar \delta}(q) \cap M)} \| \bar v \|_{L_2(\omega)} \leq C \| \bar v \|_{L_2(\VR^{d-1})},
    \end{align*}
    for all $\bar v \in H^1({\tilde{\eps}}^{-1} B)$ and $\eps  \in (0, \tilde{\eps})$. Since the family $(K_\eps)_\eps$ is bounded in the Hilbert space $\dot{BL}(\VR^{d-1})$ due to Corollary \ref{cor_Keps_bounded}, we can find for every null sequence $(\eps_n)_n$ a subsequence $(\eps_{n_k})$ and an element $\overbar K \in \dot{BL}(\VR^{d-1})$ such that for the corresponding sequence $(K_{\eps_{n_k}})_k$ it holds $\nabla K_{\eps_{n_k}} \tow  \nabla \overbar K$ in $L_2(\VR^{d-1})^{d-1}$ as $k \to \infty$. Thus, setting $\eps = \eps_{n_k}$ in \eqref{eq_Keps_rearr}, we can pass to the limit $k \rightarrow \infty$ to obtain
    \begin{align} \label{eq_Kbar}
        \int_{{\tilde{\eps}}^{-1}B} \beta_\omega \nabla \overbar K \cdot \nabla \bar v \; \mbox dx= -(\beta_1 - \beta_2) \int_\omega \nabla^M u_0(q) \cdot  V \nabla \bar v \; \mbox dx 
    \end{align}
    for all $\bar v \in H^1_0({\tilde{\eps}}^{-1}B) \subset H^1(\VR^{d-1})$. Here we used that, for $\eps =0$, we have $A_0 = I_{d-1}$, $g_0=1$ and $\Psi_0^\top = (V^\dagger)^\top = V$.
    Since $\tilde \eps>0$ was arbitrary, we can replace ${\tilde{\eps}}^{-1}B$ by $\VR^{d-1}$ in \eqref{eq_Kbar}. Noting that 
    \begin{align*}
        \nabla^M u_0(q) = V \nabla \tilde u_0 (0),
    \end{align*}
    and $V^\top V = I_{d-1}$, we see that \eqref{eq_Kbar} then coincides with \eqref{eq:limit_Keps_1}. It follows immediately by the lemma of Lax-Milgram that problem \eqref{eq:limit_Keps_1} admits a unique solution. Thus, we conclude that $\overbar K = K$ and $\nabla K_\eps \tow \nabla K$ in $L_2(\VR^{d-1})^{d-1}$.

    The second statement \eqref{eq_Keps_tow_0} can be shown in the same way as it was done in \cite[Thm. 4.14]{Sturm2019}.
\end{proof}

\subsection{Variation of the averaged adjoint variable} \label{sec_anaAdj}

We proceed by studying the averaged adjoint equation corresponding to problem \eqref{eq_J_alpha12}--\eqref{eq_PDEConst}, which reads: 
\ben \label{eq_avgedAdjEqnG}
\mbox{find } p_\eps \in H^1(M): \int_0^1 \partial_u G(\eps,su_\eps + (1-s)u_0,p_\eps)(v)\;ds =0 \quad \text{ for all } v \in H^1(M),
\een
or equivalently: find $p_\eps \in H^1(M)$ such that
\ben \label{eq_avgedAdjEqn}
\int_M \beta_{\Omega_\eps} \nabla^M p_\eps \cdot \nabla^M v + \gamma_{\Omega_\eps} p_\eps v\;dx = - \alpha_1 \int_M (u_\eps + u_0 - 2 u_d) v\;dx - \alpha_2 \int_M \nabla^M (u_\eps + u_0 - 2 u_d) \cdot \nabla^M v \;dx 
\een
for all $v \in H^1(M)$. We can state a similar result to Lemma \ref{lem:pert_state_apriori}.

\begin{lemma} \label{lem:pert_avgdadj_apriori}
    There is a constant $C>0$ such that for all $\eps \in (0, \tilde \eps)$,
    \ben \label{eq_pert_avgdadj_apriori}
        \|p_\eps - p_0\|_{H^1(M)} \leq C \eps^{(d-1)/2}.
    \een
\end{lemma}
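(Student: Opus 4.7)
The plan is to mirror the strategy of Lemma~\ref{lem:pert_state_apriori}: derive an equation for the difference $p_\eps - p_0$, test it against itself, and bound the resulting right-hand side in terms of the perturbation volume and the known $H^1$-estimate for $u_\eps - u_0$.

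First, I would rewrite the right-hand side of the averaged adjoint equation \eqref{eq_avgedAdjEqn} by splitting $u_\eps + u_0 - 2u_d = 2(u_0 - u_d) + (u_\eps - u_0)$. Using the adjoint equation \eqref{eq_AdjEqn} for $p_0$ to eliminate the terms involving $u_0 - u_d$, and then moving the term $\int_M \beta_\Omega \nabla^M p_0 \cdot \nabla^M v + \gamma_\Omega p_0 v\,dx$ to the left, I obtain
\begin{align*}
\int_M \beta_{\Omega_\eps} \nabla^M(p_\eps - p_0)\cdot \nabla^M v &+ \gamma_{\Omega_\eps}(p_\eps - p_0)v\,dx = -(\beta_1-\beta_2)\int_{\omega_\eps}\nabla^M p_0 \cdot \nabla^M v\,dx \\
&- (\gamma_1-\gamma_2)\int_{\omega_\eps} p_0 v\,dx - \alpha_1\int_M (u_\eps - u_0)v\,dx - \alpha_2\int_M \nabla^M(u_\eps-u_0)\cdot \nabla^M v\,dx
\end{align*}
for all $v\in H^1(M)$.

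Next, I would test this identity with $v = p_\eps - p_0$. The left-hand side is bounded below by $c\,\|p_\eps - p_0\|_{H^1(M)}^2$ thanks to the uniform coercivity of the bilinear form (since $\beta_1,\beta_2,\gamma_1,\gamma_2>0$ and the coefficients $\beta_{\Omega_\eps}, \gamma_{\Omega_\eps}$ take values in a fixed bounded interval independent of $\eps$). For the right-hand side, the two terms supported on $\omega_\eps$ are estimated by H\"older's inequality together with the $C^1$-regularity of $p_0$ near $q$, giving a bound of the form $C|\omega_\eps|^{1/2}\,\|p_\eps - p_0\|_{H^1(M)}$. The two terms involving $u_\eps - u_0$ are controlled by Cauchy--Schwarz together with the a~priori estimate $\|u_\eps - u_0\|_{H^1(M)}\le C\eps^{(d-1)/2}$ established in Lemma~\ref{lem:pert_state_apriori}.

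Combining these bounds and using Lemma~\ref{L:geodesic_ball} to write $|\omega_\eps|^{1/2} \le C\eps^{(d-1)/2}$, dividing both sides by $\|p_\eps - p_0\|_{H^1(M)}$ yields the claim. No genuine obstacle is expected here: the proof is essentially mechanical, the only subtlety being the correct bookkeeping when replacing the $p_0$-equation (which involves $\beta_\Omega$) into the $p_\eps$-equation (which involves $\beta_{\Omega_\eps}$), thereby producing the localized jump terms on $\omega_\eps$ that are precisely of the right order.
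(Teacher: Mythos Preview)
Your proposal is correct and follows essentially the same approach as the paper: derive the difference equation \eqref{eq_peps_p0} for $p_\eps - p_0$, test with $v = p_\eps - p_0$, use coercivity and H\"older together with the $C^1$-regularity of $p_0$ near $q$, and finish by invoking Lemma~\ref{lem:pert_state_apriori} and Lemma~\ref{L:geodesic_ball}. Your slightly more explicit bookkeeping (splitting $u_\eps + u_0 - 2u_d$ and substituting the $p_0$-equation) is exactly what the paper's one-line ``subtracting \eqref{eq_avgedAdjEqn} with $\eps=0$'' amounts to.
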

\begin{proof}
    Subtracting \eqref{eq_avgedAdjEqn} with $\eps =0$ from that same equation with $\eps > 0$, we get
    \begin{align} \label{eq_peps_p0}
        \begin{aligned}
        \int_M & \beta_{\Omega_\eps} \nabla^M (p_\eps-p_0) \cdot \nabla^M v + \gamma_{\Omega_\eps} (p_\eps-p_0) v\; \mbox dx = - \alpha_1 \int_M (u_\eps - u_0) v\; \mbox dx \\
        &- \alpha_2 \int_M \nabla^M (u_\eps - u_0) \cdot \nabla^M v\; \mbox dx 
        - (\beta_1 - \beta_2) \int_{\omega_\eps} \nabla^M p_0 \cdot \nabla^M  v\; \mbox dx - (\gamma_1 - \gamma_2) \int_{\omega_\eps}p_0 v \; \mbox dx.
        \end{aligned}
    \end{align}
    Testing with $v = p_\eps - p_0$ and using the ellipticity with respect to $H^1(M)$ of the left hand side, Hölder's inequality and the fact that $p_0$ is continuously differentiable near $q$, we arrive at
    \begin{align*}  
        \| p_\eps - p_0 \|_{H^1(M)} \leq C \left( \| u_\eps - u_0 \|_{H^1(M)} + |\omega_\eps|^{1/2} \left(\|p_0\|_{C(B_{\bar \delta}(q)\cap M)} + \|\nabla^M p_0\|_{C(B_{\bar \delta}(q)\cap M)^d} \right) \right),
    \end{align*}
    where $\bar \delta >0$ is sufficiently small and $B_{\bar \delta}(q)$ denotes the open ball in $\VR^d$ of radius 
    $\bar \delta$ centered at $q$. Using Lemma \ref{L:geodesic_ball} and Lemma \ref{lem:pert_state_apriori}, we obtain the result.
\end{proof}

\begin{definition}
    As in Definition~\ref{def_Keps} we define the extension $\tilde p_\eps := R(p_\eps \circ \exp_q \circ E)$ and define the variation of $p_\eps$ by 
\ben
Q_\eps(x) := \left( \frac{\tilde p_\eps - \tilde p_0}{\eps} \right)(\eps x),\qquad x \in \VR^{d-1}.
\een
Again notice that $Q_\eps \in \dot{BL}(\VR^{d-1})$. 
\end{definition}

Performing the change of variables $x = T_\eps(y)$ in  \eqref{eq_pert_avgdadj_apriori} and exploiting the boundedness of $g_\eps$ and $\Psi_\eps$ according to Lemma \ref{lem_Ag_bounded}, the following result can be shown in the exact same way as in Corollary \ref{cor_Keps_bounded}.
\begin{corollary} \label{cor_Qeps_bounded}
    There is a constant $C>0$ such that for all $\eps \in (0, \tilde \eps)$, it holds
    \ben
    \int_{\VR^{d-1}} (\eps Q_\eps)^2 +| \nabla  Q_\eps |^2 \leq C.
    \een
\end{corollary}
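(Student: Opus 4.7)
The plan is to reproduce the argument used in Corollary \ref{cor_Keps_bounded} verbatim, with $p_\eps - p_0$ in place of $u_\eps - u_0$ and $Q_\eps$ in place of $K_\eps$. The only new ingredient is Lemma \ref{lem:pert_avgdadj_apriori}, which supplies the required $O(\eps^{(d-1)/2})$ a priori estimate on $p_\eps - p_0$ in $H^1(M)$, playing the role that Lemma \ref{lem:pert_state_apriori} played in Corollary \ref{cor_Keps_bounded}.

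First I would transfer this estimate to the extensions used in the definition of $Q_\eps$. Since $\exp_q \circ E$ is a smooth diffeomorphism of $B$ onto $U_q$ and the Sobolev extension operator $R : H^1(B) \to H^1(\VR^{d-1})$ is continuous, the identity $\tilde p_\eps - \tilde p_0 = R((p_\eps - p_0) \circ \exp_q \circ E)$ gives
\[
\|\tilde p_\eps - \tilde p_0\|_{H^1(\VR^{d-1})}^2 \leq C \|p_\eps - p_0\|_{H^1(U_q)}^2 \leq C \|p_\eps - p_0\|_{H^1(M)}^2 \leq C\eps^{d-1}.
\]

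Second I would exploit the defining relation $\eps Q_\eps(x) = (\tilde p_\eps - \tilde p_0)(\eps x)$, whose chain rule reads $\nabla Q_\eps(x) = \nabla(\tilde p_\eps - \tilde p_0)(\eps x)$. Substituting $y = \eps x$ (with Jacobian $\eps^{-(d-1)}$) turns the claim into
\[
\int_{\VR^{d-1}} (\eps Q_\eps)^2 + |\nabla Q_\eps|^2 \, dx = \eps^{-(d-1)}\Bigl(\|\tilde p_\eps - \tilde p_0\|_{L_2(\VR^{d-1})}^2 + \|\nabla(\tilde p_\eps - \tilde p_0)\|_{L_2(\VR^{d-1})^{d-1}}^2\Bigr) \leq C,
\]
which is exactly the asserted uniform bound.

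A parallel derivation, closer in letter to the proof of Corollary \ref{cor_Keps_bounded}, applies the change of variables $x = T_\eps(y)$ to the $H^1(U_q)$ estimate via Lemma \ref{lem_trafos}(b),(c) to obtain
\[
\int_{\eps^{-1}B}\eps^{d-1}g_\eps\bigl(|\Psi_\eps^\top \nabla Q_\eps|^2 + (\eps Q_\eps)^2\bigr)\, dy \leq C\eps^{d-1},
\]
then divides by $\eps^{d-1}$ and invokes the lower bounds on $g_\eps$ and on $|\Psi_\eps^\top v|$ from Lemma \ref{lem_Ag_bounded} to control the Euclidean integrand on $\eps^{-1}B$; the remaining piece on $\VR^{d-1}\setminus \eps^{-1}B$ is controlled directly by the first approach via the continuity of $R$. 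No genuinely new difficulty arises: the proof is an algebraic transcription of Corollary \ref{cor_Keps_bounded} once Lemma \ref{lem:pert_avgdadj_apriori} is in hand.
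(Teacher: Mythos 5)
Your proposal is correct and follows essentially the same route as the paper, which itself proves this corollary by pointing to the change-of-variables argument of Corollary \ref{cor_Keps_bounded} with Lemma \ref{lem:pert_avgdadj_apriori} supplying the a priori estimate. Your first variant (continuity of $R$ followed by a Euclidean rescaling) is a slight refinement worth keeping, since it bounds the integral over all of $\VR^{d-1}$ directly, including the part outside $\eps^{-1}B$ that the paper's change-of-variables computation leaves implicit.
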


The following result is similar to the result of Lemma \ref{thm_Keps_K} and will be crucial for the rigorous justification of the topological derivative in Section \ref{sec_TD}.
\begin{lemma} \label{lem_convQ}
We have
\begin{align}
\nabla Q_\eps &\tow \nabla Q \quad \text{ weakly in } L_2(\VR^{d-1})^{d-1}, \\
\eps Q_\eps &\tow 0 \qquad \text{ weakly in } L_2(\VR^{d-1}), \label{eq_Qeps_tow_0}
\end{align}
where $Q\in \dot{BL}(\VR^{d-1})$ denotes the unique solution to 
\begin{align} \label{eq_defQ}
    \int_{\VR^{d-1}} \beta_\omega \nabla Q \cdot \nabla v \; \mbox dx= -(\beta_1 - \beta_2) \int_\omega \nabla \tilde p_0(0) \cdot  \nabla v \; \mbox dx - \alpha_2 \int_{\VR^{d-1}} \nabla K \cdot \nabla v \; \mbox dx
\end{align}
for all $v \in \dot{BL}(\VR^{d-1})$.
\black

\end{lemma}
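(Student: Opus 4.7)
The plan is to mirror the proof of Lemma \ref{thm_Keps_K} almost line-by-line, with the essential new feature being the presence of the $\alpha_1, \alpha_2$ source terms (depending on $u_\eps-u_0$) in the averaged adjoint equation, which couple the analysis of $Q_\eps$ to that of $K_\eps$. First I would subtract equation \eqref{eq_avgedAdjEqn} at $\eps=0$ from the same equation with $\eps > 0$, yielding the variational identity \eqref{eq_peps_p0} for $p_\eps - p_0$. Then, for a fixed test function $\bar v \in H^1_0(\tilde \eps^{-1} B)$, I would insert the test function $v := \eps \bar v \circ T_\eps^{-1} \in H^1(M)$ into this identity and perform the change of variables $x = T_\eps(y)$ using Lemma \ref{lem_trafos}. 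Using that $(p_\eps - p_0)\circ T_\eps = \eps Q_\eps$ and the chain rule from Lemma \ref{lem_trafos}(b) (and the analogous identity $(u_\eps - u_0)\circ T_\eps = \eps K_\eps$), after dividing by $\eps^{d-1}$ one arrives at the rescaled identity
\begin{align*}
\int_{\tilde \eps^{-1} B} \beta_\omega g_\eps A_\eps \nabla Q_\eps \cdot \nabla \bar v \, dx
+ (\beta_1 - \beta_2) \int_\omega g_\eps \nabla^M p_0(T_\eps(x)) \cdot (\Psi_\eps^\top \nabla \bar v) \, dx \\
= -\alpha_2 \int_{\tilde \eps^{-1}B} g_\eps A_\eps \nabla K_\eps \cdot \nabla \bar v \, dx + \eps \, \tilde r(\eps, \bar v),
\end{align*}
where $\tilde r(\eps,\bar v)$ collects the remaining terms (coming from the $\alpha_1$ mass term, the $\eps^2 Q_\eps$ term on the left, and the $\gamma$- and $f$-type contributions localized on $\omega$), each of which is bounded uniformly in $\eps$ for fixed $\bar v$ thanks to Lemma \ref{lem_Ag_bounded}, Corollary \ref{cor_Keps_bounded}, and the $C^1$ regularity of $p_0$ and $u_0$ near $q$.

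Next I would invoke Corollary \ref{cor_Qeps_bounded} to conclude that $(Q_\eps)$ is bounded in the Hilbert space $\dot{BL}(\VR^{d-1})$; hence along any null sequence $(\eps_n)$ there is a subsequence such that $\nabla Q_{\eps_{n_k}} \tow \nabla \overline Q$ weakly in $L_2(\VR^{d-1})^{d-1}$ for some $\overline Q \in \dot{BL}(\VR^{d-1})$. To pass to the limit in the rescaled identity I would use: (i) the uniform convergences $g_\eps \to 1$, $A_\eps \to I_{d-1}$ and $\Psi_\eps^\top \to V$ on compact sets, which follow from the smoothness of $\exp_q$; (ii) the weak convergence $\nabla K_\eps \tow \nabla K$ in $L_2(\VR^{d-1})^{d-1}$ from Lemma \ref{thm_Keps_K}, together with the strong convergence of $g_\eps A_\eps \nabla \bar v$ to $\nabla \bar v$; and (iii) the identification $\nabla^M p_0(q) \cdot V = V^\top \nabla^M p_0(q) = \nabla \tilde p_0(0)$ used in Lemma \ref{thm_Keps_K}. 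This gives, for every $\bar v \in H^1_0(\tilde\eps^{-1}B)$ with $\tilde \eps$ arbitrary,
\begin{equation*}
\int_{\VR^{d-1}} \beta_\omega \nabla \overline Q \cdot \nabla \bar v\, dx = -(\beta_1 - \beta_2) \int_\omega \nabla \tilde p_0(0) \cdot \nabla \bar v\, dx - \alpha_2 \int_{\VR^{d-1}} \nabla K \cdot \nabla \bar v\, dx,
\end{equation*}
and by density this extends to all $\bar v \in \dot{BL}(\VR^{d-1})$. Since the right-hand side defines a bounded linear functional on $\dot{BL}(\VR^{d-1})$ (the $\nabla K$ term is in $L_2$ by Lemma \ref{thm_Keps_K}), Lax--Milgram provides a unique solution $Q$ to \eqref{eq_defQ}, whence $\overline Q = Q$, and a standard subsequence argument yields convergence of the whole family.

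Finally, the second statement $\eps Q_\eps \tow 0$ in $L_2(\VR^{d-1})$ follows by the same truncation-and-weak-compactness argument used in \cite[Thm.~4.14]{Sturm2019}: Corollary \ref{cor_Qeps_bounded} bounds $\eps Q_\eps$ in $L_2$, so a subsequence converges weakly to some limit, and testing against smooth compactly supported functions combined with the weak convergence of $\nabla Q_\eps$ identifies this limit as zero. The main technical obstacle is the identification of the $\alpha_2$ contribution in the limit equation: the quantity $\nabla^M(u_\eps - u_0)$, after pull-back by $T_\eps$ and the natural $\eps^{-1}$ rescaling used to define $K_\eps$, produces $\Psi_\eps^\top \nabla K_\eps$, which is only weakly (not strongly) convergent; the passage to the limit therefore relies on pairing weak convergence of $\nabla K_\eps$ with strong convergence of the smooth test function $\bar v$ and of the coefficients $g_\eps A_\eps$, which is precisely what the uniform smoothness estimates of Lemma \ref{lem_Ag_bounded} provide.
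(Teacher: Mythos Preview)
Your proposal is correct and follows essentially the same route as the paper's proof: subtract the averaged adjoint equations, test with $v=\eps\bar v\circ T_\eps^{-1}$, change variables, isolate the $\alpha_2$ term involving $\nabla K_\eps$, bound the lower-order remainders via Lemma~\ref{lem_Ag_bounded} and Corollaries~\ref{cor_Keps_bounded}--\ref{cor_Qeps_bounded}, extract a weakly convergent subsequence, pass to the limit using Lemma~\ref{thm_Keps_K}, and conclude by Lax--Milgram uniqueness; the second statement is likewise deferred to \cite[Thm.~4.14]{Sturm2019}. One tiny slip: there is no ``$f$-type'' contribution in the adjoint difference \eqref{eq_peps_p0}, so your remainder $\tilde r$ consists only of the $\gamma_\omega\eps Q_\eps$, $\alpha_1\eps K_\eps$, and $(\gamma_1-\gamma_2)p_0$ terms, exactly the paper's $r_1,r_2,r_3$.
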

\begin{proof}
    We proceed in a similar way as in the proof of Lemma \ref{thm_Keps_K}.
   
    We test the equation which is fulfilled by the variation $p_\eps - p_0$ \eqref{eq_peps_p0} with test functions of the form $v = \eps \bar v \circ T_\eps^{-1}$ where $\eps \in (0, \tilde \eps)$ and $\bar v \in H^1_0(\tilde \eps^{-1} B)$. Then, a change of variables yields (similar to the proof of Lemma \ref{thm_Keps_K}) that
    \begin{align*}  
        \int_{{\tilde{\eps}}^{-1}B}  \beta_\omega g_\eps A_\eps(x)& \nabla Q_\eps \cdot \nabla \bar v \; \mbox dx
        + \int_{{\tilde{\eps}}^{-1}B} \gamma_\omega g_\eps \eps^2Q_\eps \bar v\; \mbox dx \\
        =& - \alpha_1 \int_{{\tilde{\eps}}^{-1}B} g_\eps \eps^2 K_\eps \bar v\; \mbox dx  - \alpha_2 \int_{{\tilde{\eps}}^{-1}B}g_\eps A_\eps(x)  \nabla K_\eps \cdot \nabla \bar v\; \mbox dx \\
        &- (\beta_1 - \beta_2) \int_{\omega} \nabla^M p_0(T_\eps(x)) \cdot (\Psi_\eps^\top \nabla \bar  v)\; \mbox dx - (\gamma_1 - \gamma_2) \int_{\omega}\eps g_\eps p_0(T_\eps(x)) \bar v \; \mbox dx
    \end{align*}
    for all $\bar v \in  H^1_0({\tilde{\eps}}^{-1}B)$. Defining
    \begin{align}
        r_1(\eps, \bar v) &:= - \int_{{\tilde{\eps}}^{-1}B} \gamma_\omega g_\eps \eps Q_\eps \bar v\; \mbox dx \\
        r_2(\eps, \bar v) &:=  - \alpha_1 \int_{{\tilde{\eps}}^{-1}B} g_\eps \eps K_\eps \bar v\; \mbox dx \\
        r_3(\eps, \bar v) &:= - (\gamma_1 - \gamma_2) \int_{\omega} g_\eps p_0(T_\eps(x)) \bar v \; \mbox dx
    \end{align}
    we get by rearranging
    \begin{align} \begin{aligned} \label{eq_Qeps_rearr}
     \int_{{\tilde{\eps}}^{-1}B}  \beta_\omega g_\eps A_\eps(x)& \nabla Q_\eps \cdot \nabla \bar v \; \mbox dx + \alpha_2 \int_{{\tilde{\eps}}^{-1}B}g_\eps A_\eps(x)  \nabla K_\eps \cdot \nabla \bar v\; \mbox dx \\
     &+ (\beta_1 - \beta_2) \int_{\omega} \nabla^M p_0(T_\eps(x)) \cdot (\Psi_\eps^\top \nabla \bar  v)\; \mbox dx
     =\eps \, (r_1(\eps, \bar v)+ r_2(\eps, \bar v)+r_3(\eps, \bar v)).
    \end{aligned} \end{align}
    Using Lemma \ref{lem_Ag_bounded}, Corollary \ref{cor_Qeps_bounded}, Corollary \ref{cor_Keps_bounded} and the fact that $p_0 \in C(B_{\bar \delta}(q) )$ for some $\bar \delta > \tilde \eps > 0$, again Hölder's inequality yields the boundedness of the terms $r_i(\eps, \bar v)$, $i = 1,2,3$ in $L_2(\VR^{d-1})$:
    \begin{align*}
        |r_1(\eps, \bar v)| &\leq C \| \eps Q_\eps \|_{L_2(\VR^{d-1})}\| \bar v \|_{L_2(\VR^{d-1})} \leq C \| \bar v \|_{L_2(\VR^{d-1})}, \\
        |r_2(\eps, \bar v)| &\leq C \| \eps K_\eps \|_{L_2(\VR^{d-1})}\| \bar v \|_{L_2(\VR^{d-1})} \leq C \| \bar v \|_{L_2(\VR^{d-1})}, \\
        |r_3(\eps, \bar v)| &\leq C \| p_0 \|_{C(B_{\bar \delta}(q))} \| \bar v \|_{L_2(\omega)} \leq C \| \bar v \|_{L_2(\VR^{d-1})},
    \end{align*}
    for all $\bar v \in H^1_0({\tilde{\eps}}^{-1} B)$ and $\eps  \in (0, \tilde{\eps})$. The family $(Q_\eps)_\eps$ is bounded in the Hilbert space $\dot{BL}(\VR^{d-1})$ due to Corollary \ref{cor_Qeps_bounded}. Therefore, for every null sequence $(\eps_n)_n$ there exists a subsequence $(\eps_{n_k})$ and an element $\overbar Q \in \dot{BL}(\VR^{d-1})$ such that the corresponding sequence $(\nabla Q_{\eps_{n_k}})_k$ converges weakly to that element $\overbar Q$, $\nabla Q_{\eps_{n_k}} \tow  \nabla \overbar Q$ in $L_2(\VR^{d-1})^{d-1}$ as $k\to \infty$. Thus, setting $\eps = \eps_{n_k}$ in \eqref{eq_Qeps_rearr} and noting that $\nabla K_\eps \tow \nabla K$ in $L_2(\VR^{d-1})^{d-1}$ according to Lemma \ref{thm_Keps_K}, we can pass to the limit $k \rightarrow \infty$ and obtain
    \begin{align} \label{eq_Qbar}
        \int_{{\tilde{\eps}}^{-1}B} \beta_\omega \nabla \overbar Q \cdot \nabla \bar v \; \mbox dx= -(\beta_1 - \beta_2) \int_\omega \nabla^M p_0(q) \cdot  (V^\dagger)^\top \nabla \bar v \; \mbox dx - \alpha_2 \int_{{\tilde{\eps}}^{-1}B} \nabla K \cdot \nabla \bar v \; \mbox dx
    \end{align}
    for all $\bar v \in H^1_0({\tilde{\eps}}^{-1}B) \subset H^1(\VR^{d-1})$. Since $\tilde \eps>0$ was arbitrary, we can replace ${\tilde{\eps}}^{-1}B$ by $\VR^{d-1}$ in \eqref{eq_Qbar}. Noting that $V^\top V = I_{d-1}$, $(V^\dagger)^\top = V$ and
    \begin{align*}
        \nabla^M p_0(q) = V \nabla \tilde p_0 (0),
    \end{align*}
    we see that \eqref{eq_Qbar} then coincides with \eqref{eq_defQ}. Since \eqref{eq_defQ} has a unique solution, we conclude that $\overbar Q = Q$ and $\nabla Q_\eps \tow \nabla Q$ in $L_2(\VR^{d-1})^{d-1}$.
  
    The second statement \eqref{eq_Qeps_tow_0} can be shown in the same way as it was done in \cite[Thm. 4.14]{Sturm2019}.

\end{proof}

\subsection{Topological derivative} \label{sec_TD}
Using the convergence behaviour of $Q_\eps$ stated in Lemma \ref{lem_convQ}, we can now derive the topological derivative of the surface PDE constrained topology optimisation problem \eqref{eq_J_alpha12}--\eqref{eq_PDEConst}. We use the approach introduced in \cite{Sturm2019}, see also \cite{a_GAST_2019a.13420v2}.

Recall the definition of the Lagrangian $G$ \eqref{eq_defG}. Note that, for any $\eps \in [0, \tilde \eps)$, the perturbed state equation \eqref{eq:state_per} and the averaged adjoint equation \eqref{eq_avgedAdjEqn} admit unique solutions $u_\eps \in H^1(M)$ and $p_\eps \in H^1(M)$, respectively. Further note that, for $q \in M \setminus \overbar \Omega$ and $\eps \in [0, \tilde \eps)$ it holds
\ben
    \mathcal J(\Omega \cup \omega_\eps(q)) = G(\eps, u_\eps, \psi)
\een
for any $\psi \in H^1(M)$ since $u_\eps$ solves \eqref{eq:state_per}. Thus, the topological derivative defined in \eqref{eq_defTD} can be rewritten for the problem at hand as
\ben
    d\mathcal J(\Omega)(q) = \underset{\eps\searrow 0}{\mbox{lim }} \frac{\mathcal J(\Omega \cup \omega_\eps(q)) - \mathcal J(\Omega)}{|\omega_\eps(q)|} = \underset{\eps\searrow 0}{\mbox{lim }} \frac{G(\eps, u_\eps, p_\eps) - G(0, u_0, p_0)}{|\omega_\eps(q)|}.
\een
The fundamental theorem of calculus yields for all $\eps \in (0, \tilde \eps)$ that
\ben
    G(\eps, u_\eps, p_\eps) = G(\eps, u_0, p_\eps) + \int_0^1 \partial_u G(\eps, s u_\eps + (1-s)u_0, p_\eps)(u_\eps - u_0) \; ds = G(\eps, u_0, p_\eps)
\een
since $p_\eps$ solves \eqref{eq_avgedAdjEqnG}. Thus, we have
\begin{align*}
G(\eps, u_\eps, p_\eps) - G(0, u_0, p_0) =& G(\eps, u_0, p_\eps) - G(0, u_0, p_0) \\
=& G(\eps, u_0, p_\eps) - G(\eps, u_0, p_0) + G(\eps, u_0, p_0) - G(0, u_0, p_0)
\end{align*}
and we obtain for the topological derivative
\ben
    d\mathcal J(\Omega)(q) = \partial_\ell G(0, u_0, p_0) + R(u_0, p_0)
\een
with
\begin{align}
        \partial_\ell G(0, u_0, p_0) :=&  \underset{\eps\searrow 0}{\mbox{lim }} \frac{G(\eps,u_0, p_0) - G(0,u_0, p_0)}{|\omega_\eps(q)|}, \label{eq_def_dlG} \\
        R(u_0, p_0) :=&  \underset{\eps\searrow 0}{\mbox{lim }} \frac{G(\eps,u_0, p_\eps) - G(\eps,u_0, p_0)}{|\omega_\eps(q)|}, \label{eq_defR}
\end{align}
if these limits exist.

Using that $u_0$ and $p_0$ are of class $C^1$ around $q$, it follows that $\partial_{\ell}G(0,u_0,p_0)$ in \eqref{eq_def_dlG} exists with
\begin{align}
    \partial_{\ell}G(0,u_0,p_0) & = (\beta_1-\beta_2)\nabla^M u_0(q)\cdot \nabla^M p_0(q) +(\gamma_1 - \gamma_2) u_0(q) p_0(q) - (f_1-f_2)p_0(q) \nonumber \\
                                & = (\beta_1-\beta_2)  \nabla \tilde u_0(0)\cdot \nabla \tilde p_0(0) + (\gamma_1 - \gamma_2) \tilde u_0(0) \tilde p_0(0)- (f_1-f_2)\tilde p_0(0).
\end{align}
Exploiting the convergence behaviour of $Q_\eps$ established in Lemma \ref{lem_convQ}, we can show the existence of the term $R(u_0, p_0)$ in \eqref{eq_defR}:

\begin{lemma}
We have 
\ben\label{eq:formula_R}
R(u_0,p_0) = (\beta_1-\beta_2)  \frac{1}{|\omega|}  \int_\omega \nabla \tilde u_0(0)\cdot \nabla Q\;dx 
\een
\end{lemma}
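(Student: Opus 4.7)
The plan is to reduce the numerator $G(\eps,u_0,p_\eps)-G(\eps,u_0,p_0)$ to an integral over the perturbation region $\omega_\eps$, transport it to the fixed reference set $\omega$ by $T_\eps$, and then pass to the limit using the convergence of $Q_\eps$ established in Lemma~\ref{lem_convQ}.

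\textbf{Step 1 (localisation).} Inspecting the definition \eqref{eq_defG} of $G$, only the terms involving the second (test-function) slot survive the subtraction, so
\begin{align*}
G(\eps,u_0,p_\eps)-G(\eps,u_0,p_0) = \int_M \bigl[ \beta_{\Omega_\eps} \nabla^M u_0 \cdot \nabla^M(p_\eps-p_0) + \gamma_{\Omega_\eps} u_0 (p_\eps-p_0) - f_{\Omega_\eps} (p_\eps-p_0) \bigr] \, dx.
\end{align*}
I would then test the unperturbed state equation \eqref{eq_PDEConst} with $v=p_\eps-p_0\in H^1(M)$ and subtract the resulting identity. Since $\beta_{\Omega_\eps}-\beta_\Omega=(\beta_1-\beta_2)\chi_{\omega_\eps}$ (and analogously for $\gamma$ and $f$), the integrand becomes supported in $\omega_\eps$ and equals
\begin{align*}
\int_{\omega_\eps} \bigl[ (\beta_1-\beta_2) \nabla^M u_0 \cdot \nabla^M(p_\eps-p_0) + (\gamma_1-\gamma_2) u_0 (p_\eps-p_0) - (f_1-f_2)(p_\eps-p_0) \bigr] \, dx.
\end{align*}

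\textbf{Step 2 (change of variables).} Next I push the three integrals forward to the reference inclusion $\omega$ via $T_\eps$, using Lemma~\ref{lem_trafos} together with the relations $(u_0\circ T_\eps)(y)=\tilde u_0(\eps y)$ and $((p_\eps-p_0)\circ T_\eps)(y)=\eps Q_\eps(y)$ from Definition~\ref{def_Keps}. The three terms become
\begin{align*}
\eps^{d-1}\int_\omega g_\eps A_\eps \nabla \tilde u_0(\eps \cdot)\cdot \nabla Q_\eps \, dy, \qquad \eps^d\int_\omega g_\eps \, \tilde u_0(\eps \cdot)\,Q_\eps \, dy, \qquad \eps^d\int_\omega g_\eps \, Q_\eps \, dy,
\end{align*}
respectively. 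After dividing by $|\omega_\eps|=|\omega|\eps^{d-1}+o(\eps^{d-1})$ from Lemma~\ref{L:geodesic_ball}, the latter two terms carry a factor $\eps$ coupled to $Q_\eps$ and therefore vanish in the limit: by Lemma~\ref{lem_convQ} one has $\eps Q_\eps\tow 0$ in $L_2(\VR^{d-1})$, and this is paired against the sequences $g_\eps \tilde u_0(\eps \cdot)\chi_\omega$ and $g_\eps \chi_\omega$, which converge strongly in $L_2(\VR^{d-1})$.

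\textbf{Step 3 (the main obstacle: limit in the $\beta$-term).} What remains is to pass to the limit in $\frac{\beta_1-\beta_2}{|\omega|}\int_\omega g_\eps A_\eps \nabla \tilde u_0(\eps x)\cdot \nabla Q_\eps(x) \, dx$, where only the weak convergence $\nabla Q_\eps\tow \nabla Q$ in $L_2(\VR^{d-1})^{d-1}$ is available. The key observation is that the coefficient $g_\eps(x)A_\eps(x)\nabla \tilde u_0(\eps x)\chi_\omega(x)$ converges \emph{strongly} in $L_2(\VR^{d-1})^{d-1}$ to $\nabla \tilde u_0(0)\chi_\omega(x)$. Indeed, the smoothness of $\exp_q$ on $B_\delta(0)$ yields uniform convergence $g_\eps\to 1$ and $A_\eps\to A_0=V^\top V=I_{d-1}$ on the bounded set $\omega$ (as already used in Lemma~\ref{lem_Ag_bounded}), while the $C^1$-regularity of $u_0$ near $q$ yields uniform convergence $\nabla \tilde u_0(\eps \cdot)\to \nabla \tilde u_0(0)$ on $\omega$. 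The strong-weak pairing then produces the claimed identity
$$R(u_0,p_0) \;=\; (\beta_1-\beta_2)\,\frac{1}{|\omega|}\int_\omega \nabla \tilde u_0(0)\cdot \nabla Q(x) \, dx,$$
thereby completing the identification of the remainder term.
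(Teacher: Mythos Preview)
Your proof is correct and follows essentially the same route as the paper: localise to $\omega_\eps$ by subtracting the unperturbed state equation tested with $p_\eps-p_0$, push to $\omega$ via $T_\eps$, and pass to the limit using the weak convergence of $Q_\eps$ from Lemma~\ref{lem_convQ} paired against strongly convergent coefficients. The only cosmetic difference is that the paper writes the gradient term after the change of variables as $g_\eps\,\nabla^M u_0(T_\eps(x))\cdot \Psi_\eps^\top\nabla Q_\eps$ (transforming only one gradient via Lemma~\ref{lem_trafos}(b)), whereas you transform both and obtain $g_\eps A_\eps\nabla\tilde u_0(\eps\,\cdot)\cdot\nabla Q_\eps$; these are of course the same expression, and your justification of the strong--weak pairing in Step~3 is in fact more explicit than the paper's.
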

\begin{proof}
Using \eqref{eq_PDEConst} with $v = p_\eps - p_0$ and changing variables, we compute 
\ben
\begin{split}
    G(\eps,u_0,p_\eps) & - G(\eps,u_0,p_0)  = \int_M \beta_{\Omega_\eps} \nabla^M u_0 \cdot \nabla^M (p_\eps - p_0) + \gamma_{\Omega_\eps} u_0 (p_\eps - p_0) - f_{\Omega_\eps}(p_\eps - p_0) \; \mbox dx \\
    =&    \int_{\omega_\eps} (\beta_1 - \beta_2) \nabla^M u_0 \cdot \nabla^M (p_\eps - p_0) \; \mbox dx  + \int_{\omega_\eps} (\gamma_1 - \gamma_2) u_0 (p_\eps - p_0) \; \mbox dx  - \int_{\omega_\eps} (f_1 - f_2) (p_\eps - p_0) \; \mbox dx \\
    =& (\beta_1 - \beta_2) \eps^{d-1} \int_\omega g_\eps \nabla^M u_0(T_\eps(x) )\cdot \Psi_\eps^\top \nabla Q_\eps\; dx \\
    &+ (\gamma_1 - \gamma_2) \eps^{d-1} \int_\omega u_0( T_\eps(x) )  \eps Q_\eps \; \mbox dx - (f_1 - f_2) \eps^{d-1} \int_\omega \eps Q_\eps \; \mbox dx.
\end{split}
\een
Hence dividing by $|\omega_\eps|$ and passing to the limit $\eps \searrow 0$ yields \eqref{eq:formula_R}, where we used that $\eps^{d-1} / |\omega_\eps| \to 1/|\omega|$ as $\eps \to 0$ (cf. Lemma \ref{L:geodesic_ball}), $\Psi_0^\top = V$, \eqref{eq_trafoPsi} and $V^\top V = I$. 
\end{proof}

Hence, for $q \in M \setminus  \overline \Omega$, the topological derivative reads
\begin{align}
    dJ(\Omega)(q) =& \partial_\ell G(0, u_0, p_0) + R(u_0, p_0) \nonumber \\
    =& (\beta_1 - \beta_2) \frac{1}{|\omega|} \int_\omega \nabla \tilde u_0(0) \cdot \left(\nabla \tilde p_0(0) + \nabla Q \right) \; \mbox dx \\
    &+ (\gamma_1 - \gamma_2) \tilde u_0(0) \tilde p_0(0) - (f_1-f_2) \tilde p_0(0).
\end{align}
This finishes the proof of Theorem \ref{thm_TD}.  
\begin{flushright}
    $\blacksquare$ 
\end{flushright}

\subsection{Explicit determination of $Q$} \label{sec_Qexpl}
When $\alpha_2 = 0$ and $\omega =B_1(0)$ we can compute the solution $Q$ to problem \eqref{eq_defQ} explicitly by the ansatz $Q(x) = \sum_{i=1}^{d-1} P_i Q_{e_i}(x)$ with $P_i$ the components of $\nabla \tilde p_0(0) = (P_1,\dots, P_{d-1})^\top \in \VR^{d-1}$ and 
\begin{equation} \label{eq_def_Qei}
    Q_{e_i}(x) := \begin{cases}
                        a_i x_i =: Q_{e_i}^{in}(x) & \mbox{in }\omega,\\
                        a_i \frac{x_i}{|x|^{d-1}} =: Q_{e_i}^{out}(x) & \mbox{in }\VR^{d-1} \setminus \omega,
                    \end{cases}
\end{equation}
for $i \in \{1,\dots d-1\}$; see also \cite[Rem. 6.10]{AmstutzGangl2019} and \cite[Prop. 1]{a_HILA_2008a}. Here $Q_{e_i}$ should solve 
\ben\label{eq:Q_ei}
\int_{\VR^{d-1}} \beta_\omega \nabla Q_{e_i}\cdot \nabla \varphi = - (\beta_1-\beta_2)  \int_\omega e_i \cdot\nabla \varphi \;dx \quad \text{ for all } \varphi \in \dot{BL}(\VR^{d-1})
\een
with the $i$-th unit vector $e_i$. Problem \eqref{eq:Q_ei} can be rewritten in strong form as the transmission problem
\begin{subequations}
\begin{align}
    - \beta_2 \Delta Q^{out} &= 0 \qquad &&\mbox{in } \VR^{d-1} \setminus \omega, \label{eq_Q1ei_strong_out} \\
    - \beta_2 \Delta Q^{ in} &= (\beta_1-\beta_2) \mbox{div}(e_i + \nabla Q^{ in} )  \qquad &&\mbox{in } \omega, \label{eq_Q1ei_strong_in}\\
    \left( \beta_1 \nabla Q^{ in} - \beta_2 \nabla Q^{ out} \right) \cdot n_{out} &= -(\beta_1 - \beta_2)e_i \cdot n_{out} \qquad &&\mbox{on } \partial \omega, \label{eq_Q1ei_strong_transGrad}\\
    Q^{ in} &= Q^{ out} \qquad &&\mbox{on } \partial \omega, \label{eq_Q1ei_strong_trans}
\end{align}
\end{subequations}
where $n_{out}$ denotes the unit normal vector pointing out of $\omega$. We see immediately that $Q_{e_i}$ defined in \eqref{eq_def_Qei} satisfies \eqref{eq_Q1ei_strong_in} and \eqref{eq_Q1ei_strong_trans}. Also \eqref{eq_Q1ei_strong_out} is readily verified. Furthermore, it can be seen that with the choice $a_i = - \frac{\beta_1 - \beta_2}{\beta_1 + (d-2)\beta_2}$ also the transmission condition \eqref{eq_Q1ei_strong_transGrad} is satisfied. Note that the constants $a_i$ are independent of the index $i$. Thus,
\ben
    \nabla Q|_{\omega} = -\frac{\beta_1 - \beta_2}{\beta_1 + (d-2)\beta_2} \nabla \tilde p_0(0),
\een
and thus, for $d=3$ we have
\ben  \label{eq_TDprefinal}
    dJ(\Omega)(q) =  2 \beta_2 \frac{\beta_1-\beta_2}{\beta_1 + \beta_2} \nabla \tilde u_0(0) \cdot \nabla \tilde p_0(0) + (\gamma_1 - \gamma_2) \tilde u_0(0) \tilde p_0(0) - (f_1-f_2) \tilde p_0(0)
\een
for $q \in M \setminus \overbar \Omega$. Note that \eqref{eq_TDprefinal} has the same structure as the analogous formula for the case of PDEs posed on volumes, see \cite[Thm. 6.1]{Amstutz2006}.
A similar procedure is also possible for ellipse-shaped inclusions $\omega$.

It is readily verified that, for $q \in \Omega$ and $d=3$, the topological derivative reads
\ben   
    dJ(\Omega)(q) =  2 \beta_1 \frac{\beta_2-\beta_1}{\beta_2 + \beta_1} \nabla \tilde u_0(0) \cdot \nabla \tilde p_0(0) + (\gamma_2 - \gamma_1) \tilde u_0(0) \tilde p_0(0) - (f_2-f_1) \tilde p_0(0). 
\een
Using \eqref{eq_trafoPsi} and $V^\top V = I_{d-1}$, note that it holds that $\nabla^M u_0(q) \cdot \nabla^M p_0(q) = \nabla \tilde u_0(0) \cdot \nabla \tilde p_0(0)$. Summarizing, we have shown the following corollary.

\begin{corollary} \label{cor_TDspecialCase}
    Let $d=3$, $\omega = B_1(0)$ and $\alpha_2=0$. Then, the topological derivative of problem \eqref{eq_J_alpha12}--\eqref{eq_PDEConst} reads
    \ben \label{eq_TDfinal}
        d \Cj(\Omega)(q) = 2 \beta_2 \frac{\beta_1-\beta_2}{\beta_1 + \beta_2} \nabla^M u_0(q) \cdot \nabla^M p_0(q) + (\gamma_1 - \gamma_2) u_0(q) p_0(q) - (f_1-f_2) p_0(q)
    \een
    for $q \in M \setminus \overbar \Omega$, and 
    \ben \label{eq_TDfinal_qOmega}
        d \Cj(\Omega)(q) =2 \beta_1 \frac{\beta_2-\beta_1}{\beta_2 + \beta_1} \nabla^M u_0(q) \cdot \nabla^M p_0(q) + (\gamma_2 - \gamma_1) u_0(q) p_0(q) - (f_2-f_1) p_0(q). 
    \een
    for $q \in \Omega$.
\end{corollary}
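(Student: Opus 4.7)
My plan is to deduce the corollary directly from Theorem~\ref{thm_TD} by computing the auxiliary function $Q$ in closed form under the three simplifying assumptions. When $\alpha_2=0$, the equation \eqref{eq_defQ} decouples from $K$ and reduces to
\[
    \int_{\VR^{2}} \beta_\omega \nabla Q \cdot \nabla v \, dx = -(\beta_1-\beta_2)\int_\omega \nabla \tilde p_0(0)\cdot \nabla v \, dx \qquad \text{for all } v \in \dot{BL}(\VR^{2}).
\]
Writing $\nabla \tilde p_0(0)=(P_1,P_2)^\top$ and using linearity of the problem, I would make the ansatz $Q = P_1 Q_{e_1} + P_2 Q_{e_2}$ where each $Q_{e_i}$ solves \eqref{eq:Q_ei} with the $i$th unit vector on the right-hand side. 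The goal then is to exhibit $Q_{e_i}$ explicitly for the unit disk $\omega = B_1(0)$.

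Next I would pass from the weak problem \eqref{eq:Q_ei} to the equivalent strong transmission formulation \eqref{eq_Q1ei_strong_out}--\eqref{eq_Q1ei_strong_trans} and test the dipole ansatz $Q_{e_i}^{\mathrm{in}}(x) = a_i x_i$ inside $\omega$ and $Q_{e_i}^{\mathrm{out}}(x) = a_i x_i/|x|^{d-1}$ outside, following the classical constructions in \cite{AmstutzGangl2019,a_HILA_2008a}. Each piece is harmonic, they agree on $\partial\omega$, and the Laplacian equation inside absorbs the $(\beta_1-\beta_2)\,\mathrm{div}\,e_i = 0$ inhomogeneity. The only nontrivial check is the flux jump \eqref{eq_Q1ei_strong_transGrad} on $\partial B_1(0)$: a short computation with $n_{\mathrm{out}}=x$ on the unit sphere determines $a_i = -(\beta_1-\beta_2)/(\beta_1+(d-2)\beta_2)$, which for $d=3$ collapses to $a_i = -(\beta_1-\beta_2)/(\beta_1+\beta_2)$, independent of $i$. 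Consequently $\nabla Q|_\omega = -\tfrac{\beta_1-\beta_2}{\beta_1+\beta_2}\,\nabla \tilde p_0(0)$.

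With this I can evaluate the remainder term from Theorem~\ref{thm_TD}:
\[
    R(u_0,p_0) = (\beta_1-\beta_2)\frac{1}{|\omega|}\int_\omega V^\top\nabla^M u_0(q)\cdot \nabla Q \, dx = -\frac{(\beta_1-\beta_2)^2}{\beta_1+\beta_2}\,\nabla \tilde u_0(0)\cdot \nabla \tilde p_0(0),
\]
where I have used that $\nabla Q$ is constant on $\omega$ and $V^\top \nabla^M u_0(q) = \nabla \tilde u_0(0)$ by Lemma~\ref{lem_trafos}(a). Adding this to $\partial_\ell G(0,u_0,p_0)$ the coefficient of the gradient term becomes $(\beta_1-\beta_2) - (\beta_1-\beta_2)^2/(\beta_1+\beta_2) = 2\beta_2(\beta_1-\beta_2)/(\beta_1+\beta_2)$, and the remaining two terms carry through unchanged. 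Finally, the identity $\nabla^M u_0(q)\cdot\nabla^M p_0(q) = \nabla \tilde u_0(0)\cdot\nabla \tilde p_0(0)$, which follows from Lemma~\ref{lem_trafos}(a) together with $V^\top V = I_{d-1}$, allows me to rewrite the formula in the intrinsic surface form \eqref{eq_TDfinal}. For $q\in\Omega$ I would invoke the symmetry remark immediately preceding Theorem~\ref{thm_TD}: replacing the perturbation $\Omega\cup\omega_\eps$ by $\Omega\setminus\overline\omega_\eps$ corresponds to swapping $(\beta_1,\gamma_1,f_1)\leftrightarrow(\beta_2,\gamma_2,f_2)$, and the computation above carries over verbatim with the constant becoming $2\beta_1(\beta_2-\beta_1)/(\beta_2+\beta_1)$, yielding \eqref{eq_TDfinal_qOmega}.

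The main obstacle is purely algebraic: one must check the dipole ansatz against the transmission condition on $\partial B_1(0)$ with the correct outward normal and extract $a_i$, then carefully combine $(\beta_1-\beta_2)$ from $\partial_\ell G$ with the negative contribution from $R$ so that the neat prefactor $2\beta_2(\beta_1-\beta_2)/(\beta_1+\beta_2)$ emerges. Everything else is a substitution into the already established Theorem~\ref{thm_TD}.
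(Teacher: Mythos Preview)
Your proposal is correct and follows essentially the same approach as the paper: it specializes Theorem~\ref{thm_TD} by solving the decoupled equation for $Q$ explicitly via the dipole ansatz \eqref{eq_def_Qei}, determines the constant $a_i$ from the transmission condition, and then combines $\partial_\ell G$ with $R$ using the identity $\nabla^M u_0(q)\cdot\nabla^M p_0(q)=\nabla\tilde u_0(0)\cdot\nabla\tilde p_0(0)$. The algebraic simplification and the role-swap for $q\in\Omega$ are exactly as in the paper.
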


\section{Numerical results}\label{sec:numerics}
In this section, we illustrate the use of the topological derivative derived in the previous section in a numerical topology optimization example posed on a sphere in three space dimensions. The sphere $M$ is interpreted as the surface of the planet earth and the data of the problem is chosen in such a way that the optimal shape $\Omega^* \subset M$ represents the major land masses of the planet.

\subsection{Problem setting} \label{sec_problemSetting}
We consider the problem of minimising the objective function \eqref{eq_J_alpha12} subject to the surface PDE problem given by \eqref{eq_PDEConst}. We choose the parameters $\alpha_1 = 1$ and $\alpha_2=0$, $\beta_1 = 10^4$, $\beta_2 = 10^{-3}$, $\gamma_1 = \gamma_2 =1$, $f_1 = 10^3 $, $f_2=0$. Thus, the problem reads
\begin{subequations} \label{eq_problemNumerics}
 \begin{align}
    \underset{(\Omega,u)}{\mbox{min}} &\, \int_M |u-u_d|^2 \; \mbox dx  \\
    \mbox{subject to }u \in H^1(M): 
\int_M \beta_{\Omega}& \nabla^M u \cdot \nabla^M v + uv \;dx = f_1\int_\Omega  v\;dx \qquad \mbox{for all } v \in H^1(M) \label{eq_stateNumerics}
\end{align}
\end{subequations}
with $\beta_\Omega(x) = \chi_\Omega(x) \beta_1 + \chi_{M \setminus \Omega}(x) \beta_2$.

In order to define a desired state $u_d$, we choose a reference shape $\Omega^{*}$, compute the corresponding solution to the surface PDE \eqref{eq_stateNumerics} $u^{*}$ and set $u_d :=  u^{*}$. Then, by construction, $\Omega^*$ is also the solution of problem \eqref{eq_problemNumerics}. The reference shape chosen for this numerical example is given by topographical data of the land masses of the earth, which we obtained from \cite{earthData}. The problem at hand can be interpreted as a steady state heat conduction problem where the land masses $\Omega$ have very high conductivity and the water regions $M \setminus \overbar \Omega$ very low conductivity. A heat source is supported on the land masses $\Omega$. 

\subsection{Optimization algorithm} \label{sec_optiAlgo}
We solve the problem by means of the level set algorithm introduced in \cite{a_AMAN_2006a}, which is based solely on the topological derivative. In \cite{a_AMAN_2006a}, the algorithm is introduced in the setting of topology optimization problems which are constrained by PDEs on volumes, however, the extension to surface PDE constraints is straightforward. The idea of the algorithm is to represent the design $\Omega \subset M$ by means of a level set function $\psi : M \rightarrow \VR$ as $\Omega = \{x \in M : \psi(x) < 0\}$. Introducing the so-called generalized topological derivative,
\begin{equation}
    g_\Omega(q) := \begin{cases}
                        -dJ(\Omega)(q) &q \in \Omega, \\
                        dJ(\Omega)(q) &q \in M \setminus \Omega,
                   \end{cases}
\end{equation}
it follows that a stationarity condition is given by 
\begin{equation}
    \psi(q) = g_\Omega(q) \mbox{ for all }q \in M \setminus \partial \Omega.
\end{equation}
The idea of the algorithm is to reach this condition by a spherical linear interpolation (SLERP) iteration on the unit sphere $\mathcal S$ of the Hilbert space $L_2(M)$. We start the algorithm with an initial design $\Omega_0$ and the corresponding level set function $\psi_0$, which we assume to be normalized, $ \|\psi_0\|_{L_2(M)}=1$. In iteration $k\geq0$ of the algorithm, let $\Omega_k$ the current shape, $\psi_k$ the corresponding level set function, and 
\benn
\theta_k = \mbox{arccos}\left( \left(\psi_k , \frac{g_{\Omega_k}}{\|g_{\Omega_k} \|_{L_2(M)}} \right)_{L_2(M)} \right)
\eenn
the angle between $\psi_k$ and $g_{\Omega_k}$ in an $L_2(M)$-sense. Then the new iterate $\psi_{k+1}$ is given by
\begin{equation} \label{eq_levelsetupdate}
    \psi_{k+1} = \frac{1}{\mbox{sin}(\theta_k)} \left( \mbox{sin}((1-\kappa_k)\theta_k) \, \psi_k + \mbox{sin}(\kappa_k\theta_k) \frac{g_{\Omega_k}}{\|g_{\Omega_k}\|_{L^2(M)}}\right).
\end{equation}
Here, $\kappa_k \in (0,1]$ is a line search parameter which is adapted in every iteration in order to achieve a sufficient descent of the objective function. Note that, by construction, it follows from $\|\psi_0\|_{L_2(M)} =1$ that $\|\psi_k\|_{L_2(M)}=1$ for all $k > 0$. For more details on the algorithm and its implementation, we refer the interested reader to \cite{a_AMAN_2006a}.

\subsection{Numerical experiments}
We now show numerical results obtained by applying the level set algorithm introduced in Section \ref{sec_optiAlgo} to the problem described in Section \ref{sec_problemSetting} using the topological derivative formulas \eqref{eq_TDfinal} for $q \in M \setminus \overline{\Omega}$ and \eqref{eq_TDfinal_qOmega} for $q \in \Omega$.

The surface $M$ is chosen as the unit sphere in three space dimensions, which we discretized into 161620 triangular surface elements with 80812 vertices, see Figure \ref{fig_TDs}(a). In order to determine the desired shape $\Omega^*$ representing the major land masses of the planet, we used the data obtained from \cite{earthData} to decide for every triangular surface element whether it should belong to land or water regions. This decision is made based on the position of the element's centroid. The left columns of Figures \ref{fig_finalDesigns1} and \ref{fig_finalDesigns2}, i.e. Figures \ref{fig_finalDesigns1}(a),(c) and Figures \ref{fig_finalDesigns2}(a),(c),(e), show the obtained element-wise material distribution from five different perspectives. Given this material distribution, we solved problem \eqref{eq_stateNumerics} by means of piecewise linear, globally continuous finite elements on the given grid to obtain the desired state $u^* = u_d$. For all numerical computations, we used the finite element software package \texttt{NGSolve} \cite{Schoeberl2014}.

As an initial design for the optimisation, we choose the empty set, $\Omega = \emptyset$ corresponding to a design where the sphere is only covered by water regions. This is realized by choosing $\psi_0 =1 / \|1 \|_{L_2(M)}$ as the initial level set function. This level set function is updated according to \eqref{eq_levelsetupdate} by means of the generalized topological derivative. Figure \ref{fig_TDs}(b)--(f), shows the topological derivative according to formula \eqref{eq_TDfinal} on $M$ for this initial configuration from five different angles.

In our numerical experiments, we used a rather conservative choice of the line search parameter $\kappa$: We initialized it to $\kappa = \kappa_{max} := 0.05$. When no decrease was achieved with this value, we halved $\kappa$ until the objective function decreased. At the end of each iteration, we increased $\kappa$ by a factor of $1.1$ and projected the resulting value to $[0, \kappa_{max}]$.

 After 57 iterations of the optimization algorithm, the objective function was reduced from approximately $2.5 \cdot 10^6$ to approximately $2 \cdot 10^3$. We remark that, due to the fine-scale topographical data used in this example, the limited computational resources and the fact that the optimal design is given as element data and therefore not smooth, the exact optimiser could not be reached.
 In order to obtain better accuracy at the material interfaces, an approach incorporating shape sensitivity information could be used. However, this is beyond the scope of this paper.
 Nevertheless, Figures \ref{fig_finalDesigns1} and \ref{fig_finalDesigns2} show that the reconstruction was successful and all of the land masses could be recovered to a rather good precision, which illustrates the usefulness of topological derivatives in topology optimisation problems posed on manifolds.

\begin{figure}
    \begin{tabular}{cc}
        \includegraphics[width=.5\textwidth, trim = 450 0 330 0, clip]{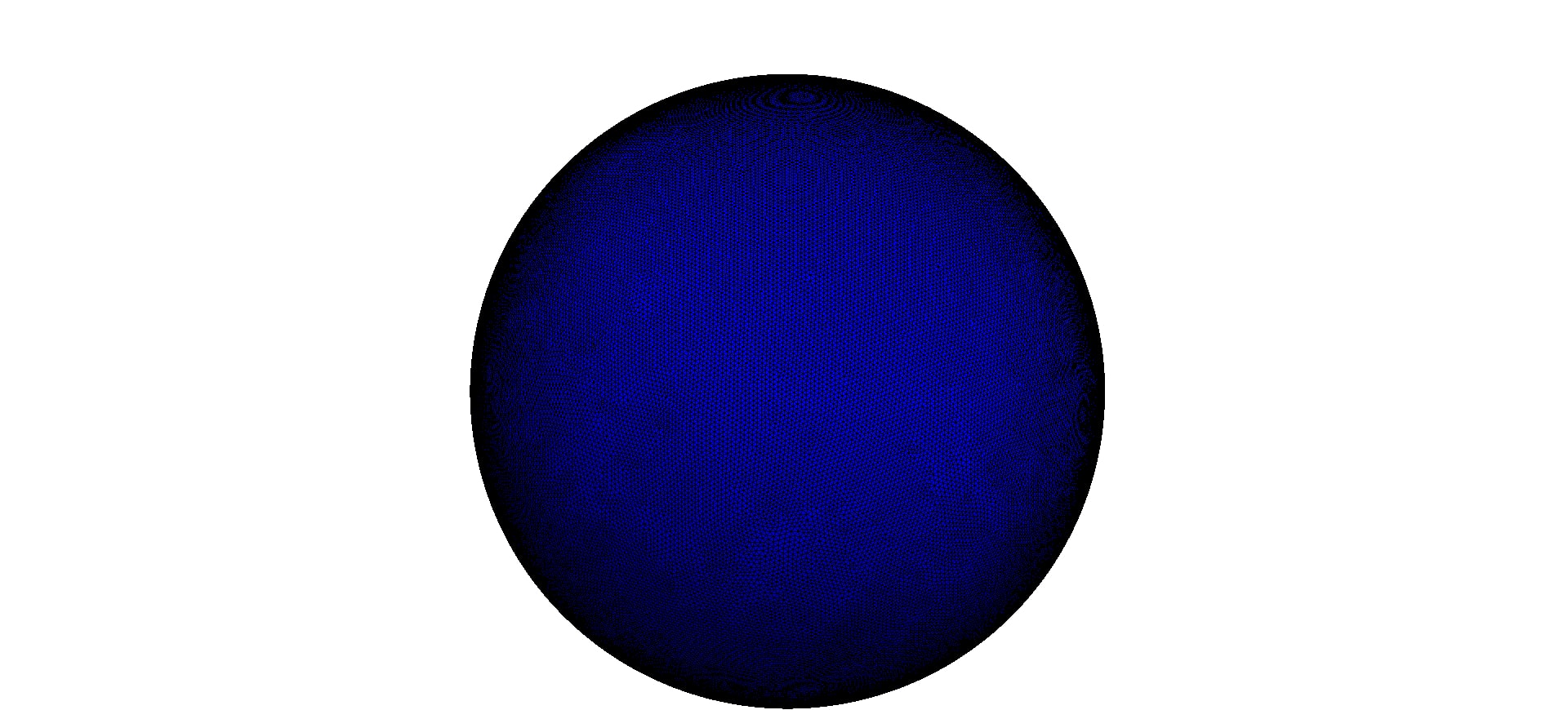} &         \includegraphics[width=.5\textwidth, trim = 450 0 330 0, clip]{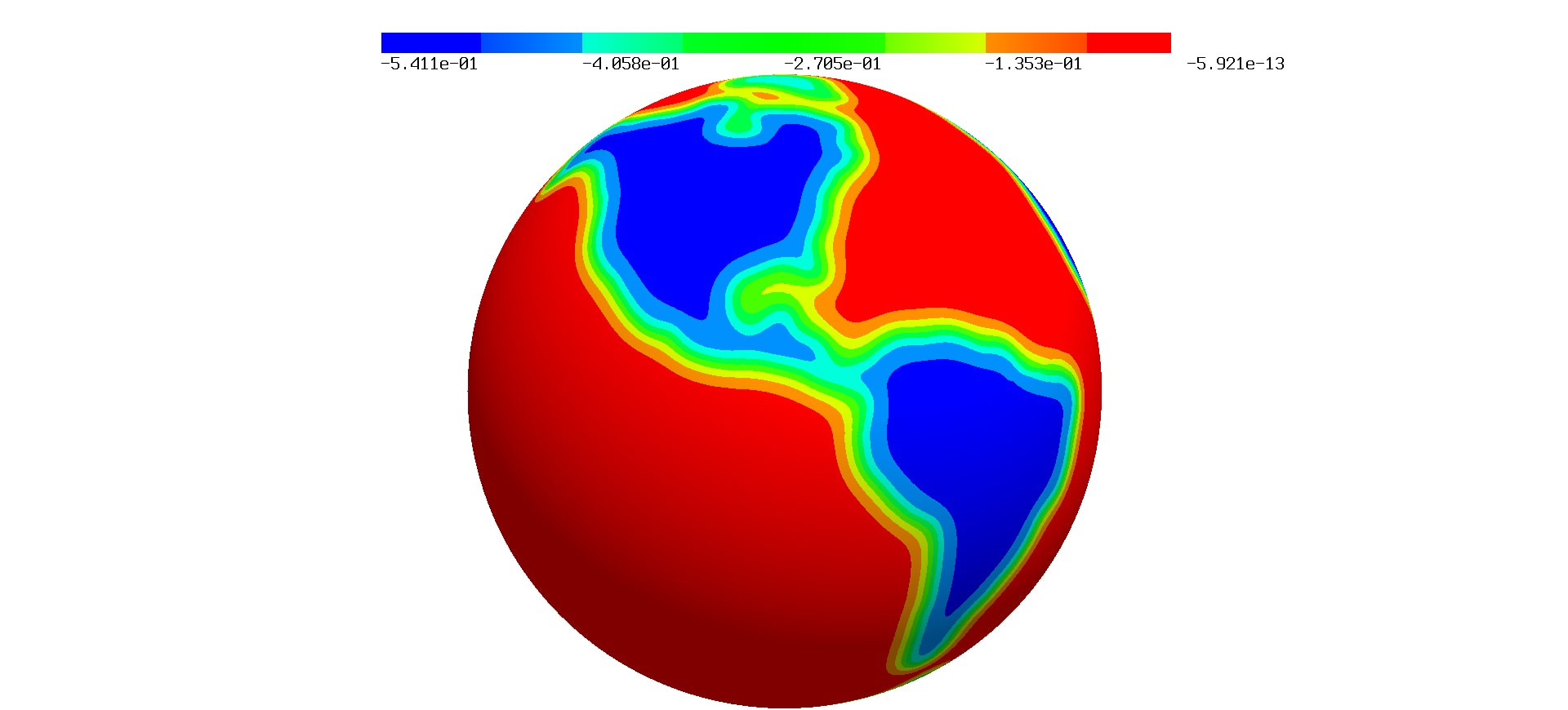} \\ (a) & (b) \\
        \includegraphics[width=.5\textwidth, trim = 450 0 330 0, clip]{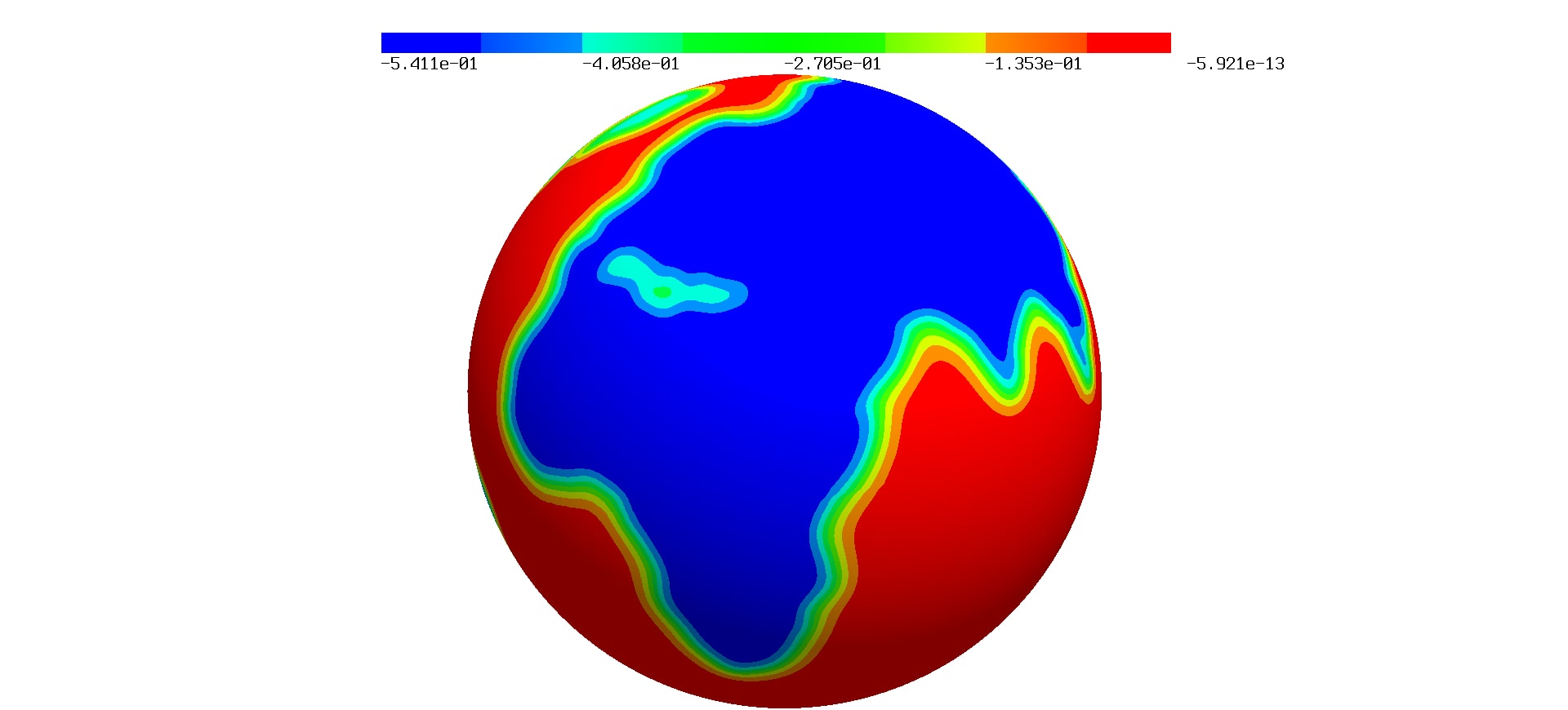} &         \includegraphics[width=.5\textwidth, trim = 450 0 330 0, clip]{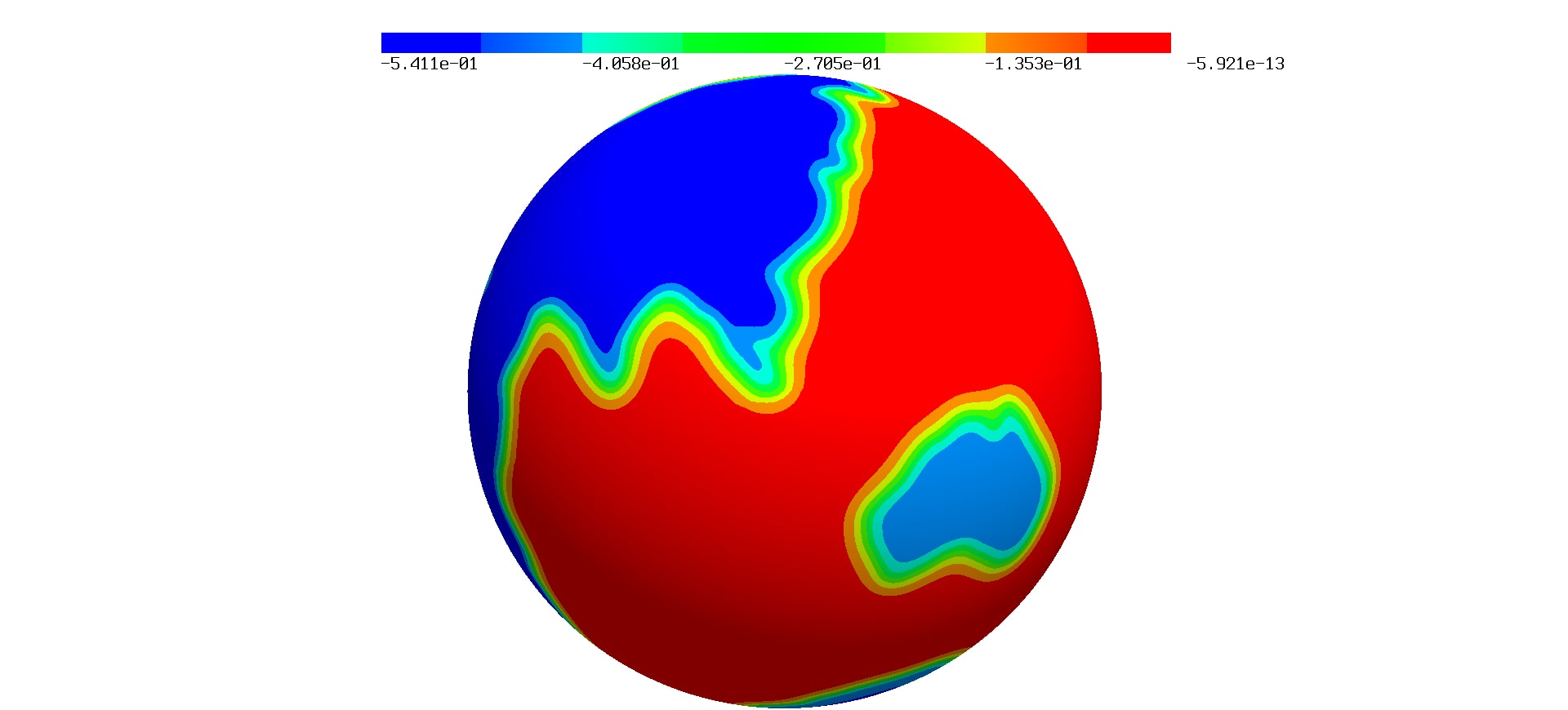} \\(c) & (d) \\
        \includegraphics[width=.5\textwidth, trim = 450 0 330 0, clip]{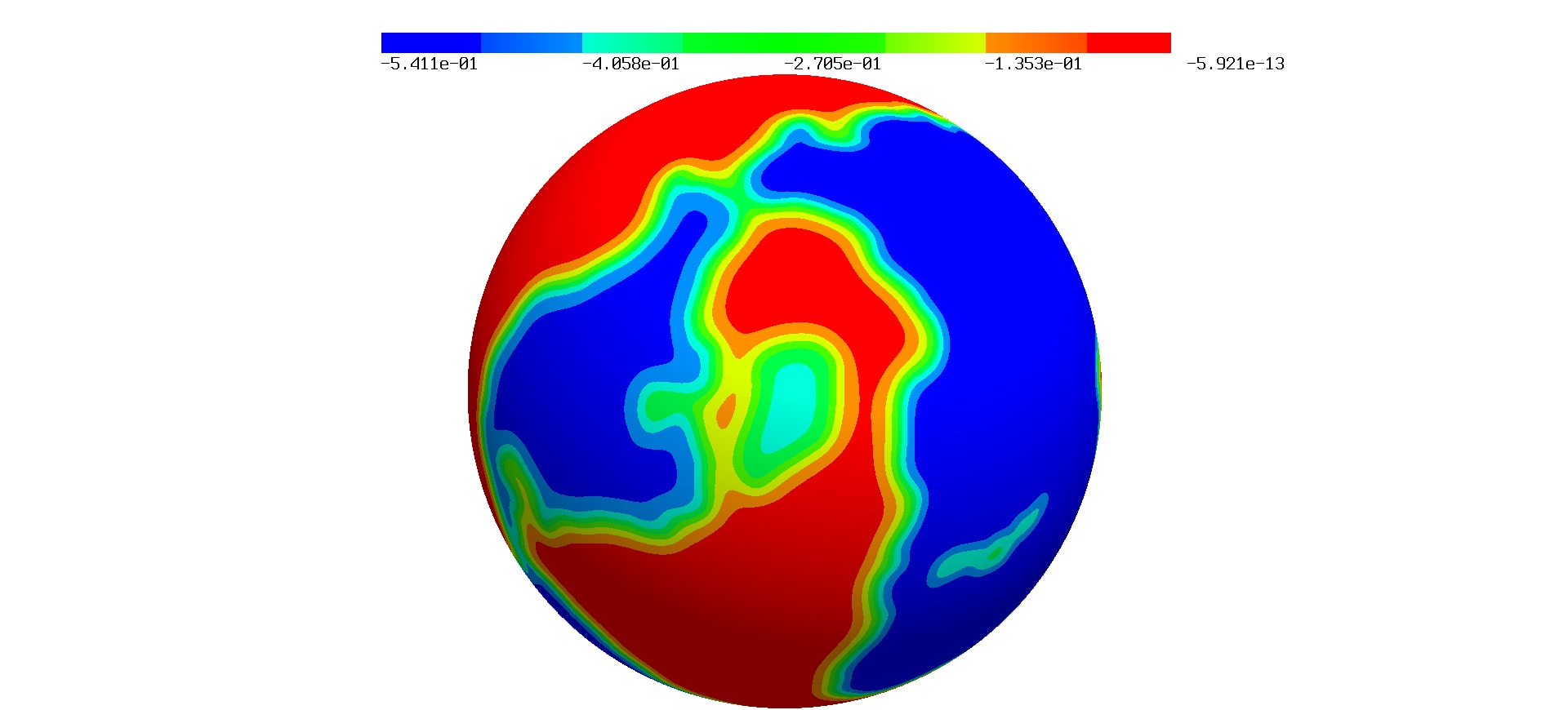} &         \includegraphics[width=.5\textwidth, trim = 450 0 330 0, clip]{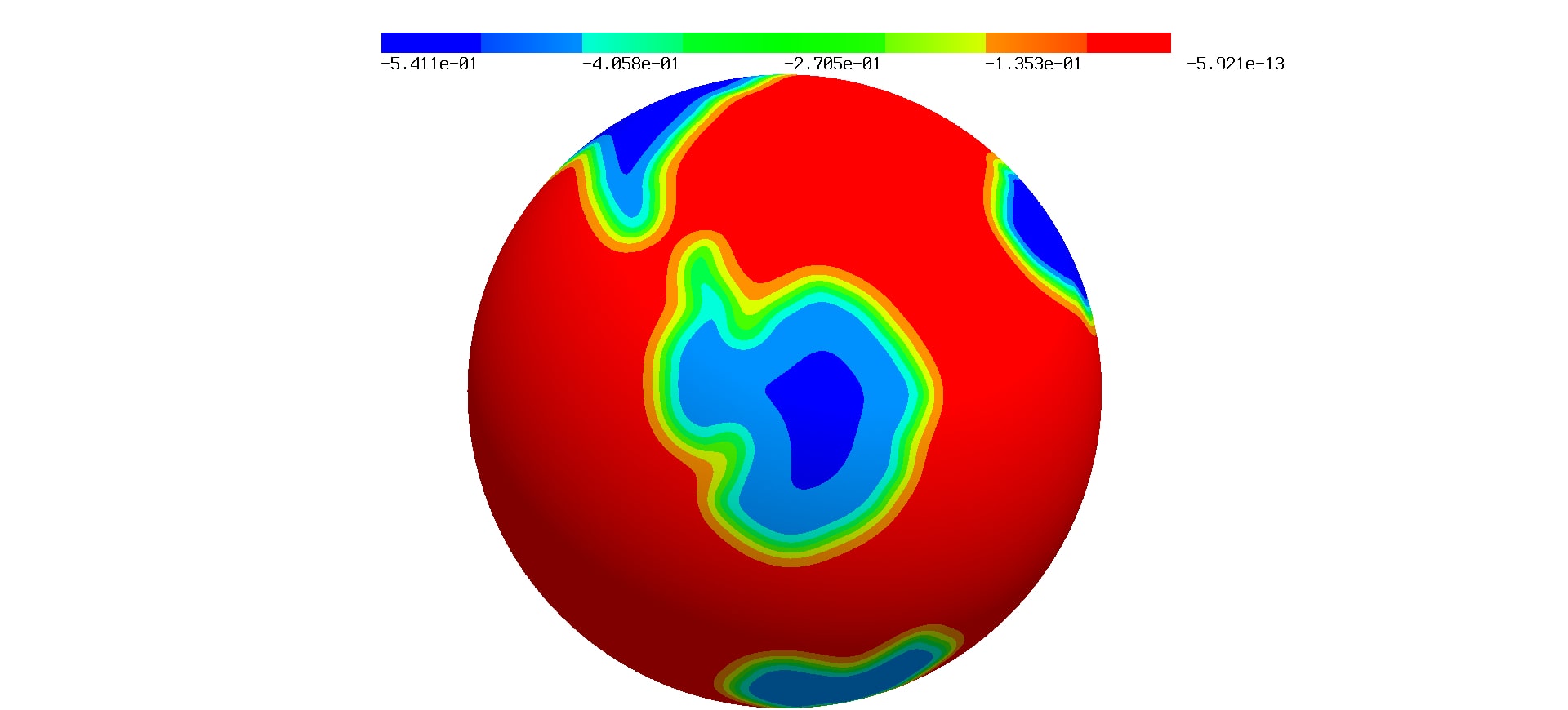}  \\(e) & (f) 
    \end{tabular}
    \caption{(a) Initialialization of level set function as constant. (b)--(f) Different views of topological derivative for initial configuration.}
    \label{fig_TDs}
\end{figure}

\begin{figure}
    \begin{center}
    \begin{tabular}{c|c}
        \includegraphics[width=.5\textwidth, trim = 450 0 450 0, clip]{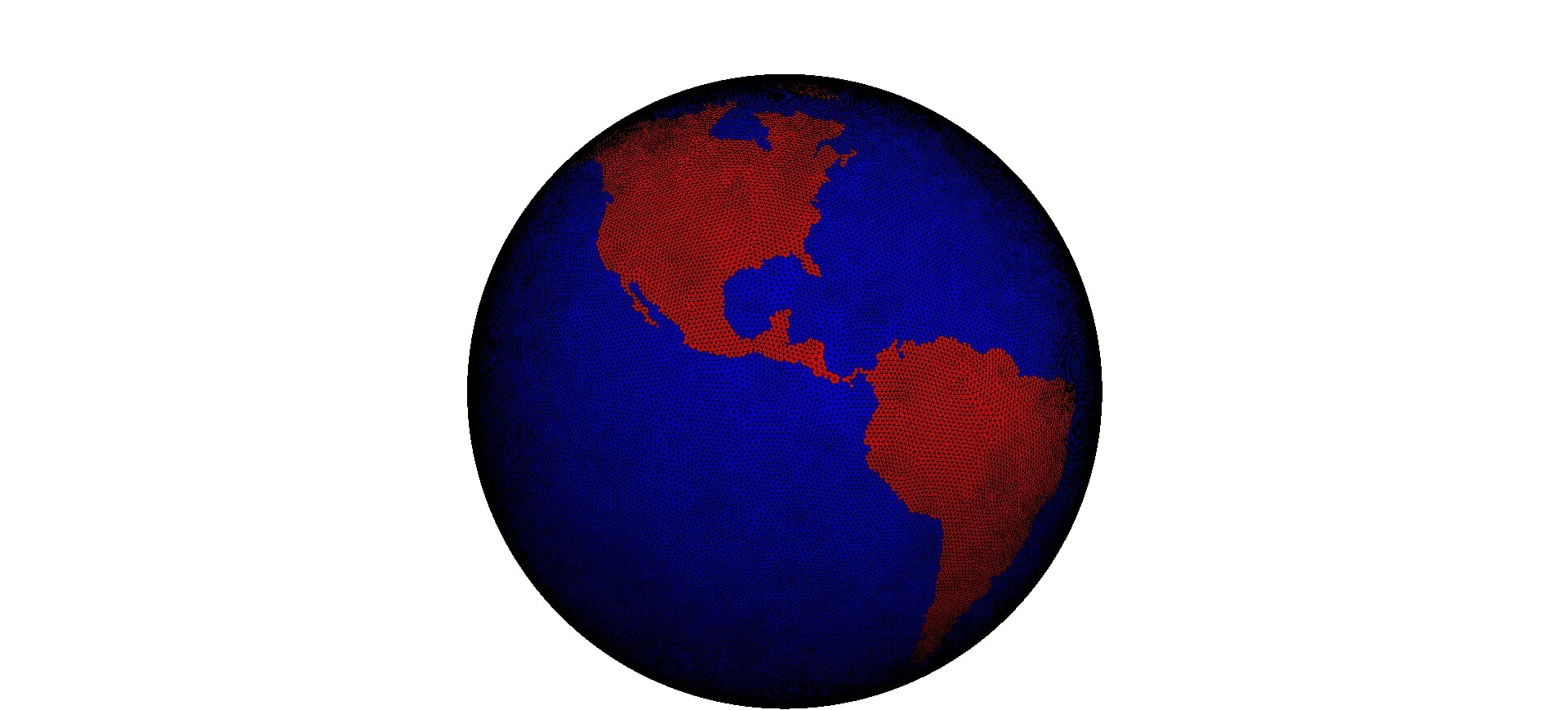} &         \includegraphics[width=.5\textwidth, trim = 450 0 450 0, clip]{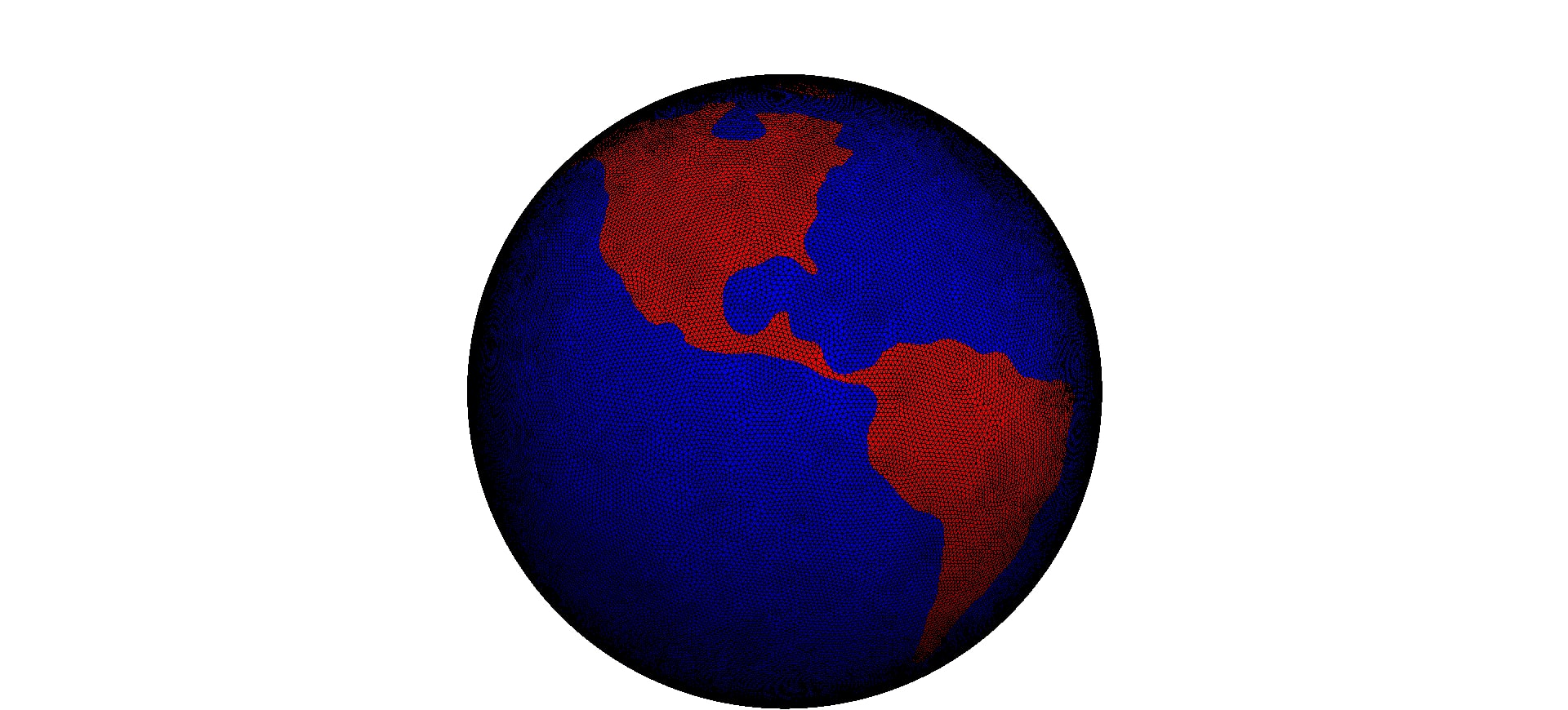} \\
        (a)&(b)\\
        \includegraphics[width=.5\textwidth, trim = 450 0 450 0, clip]{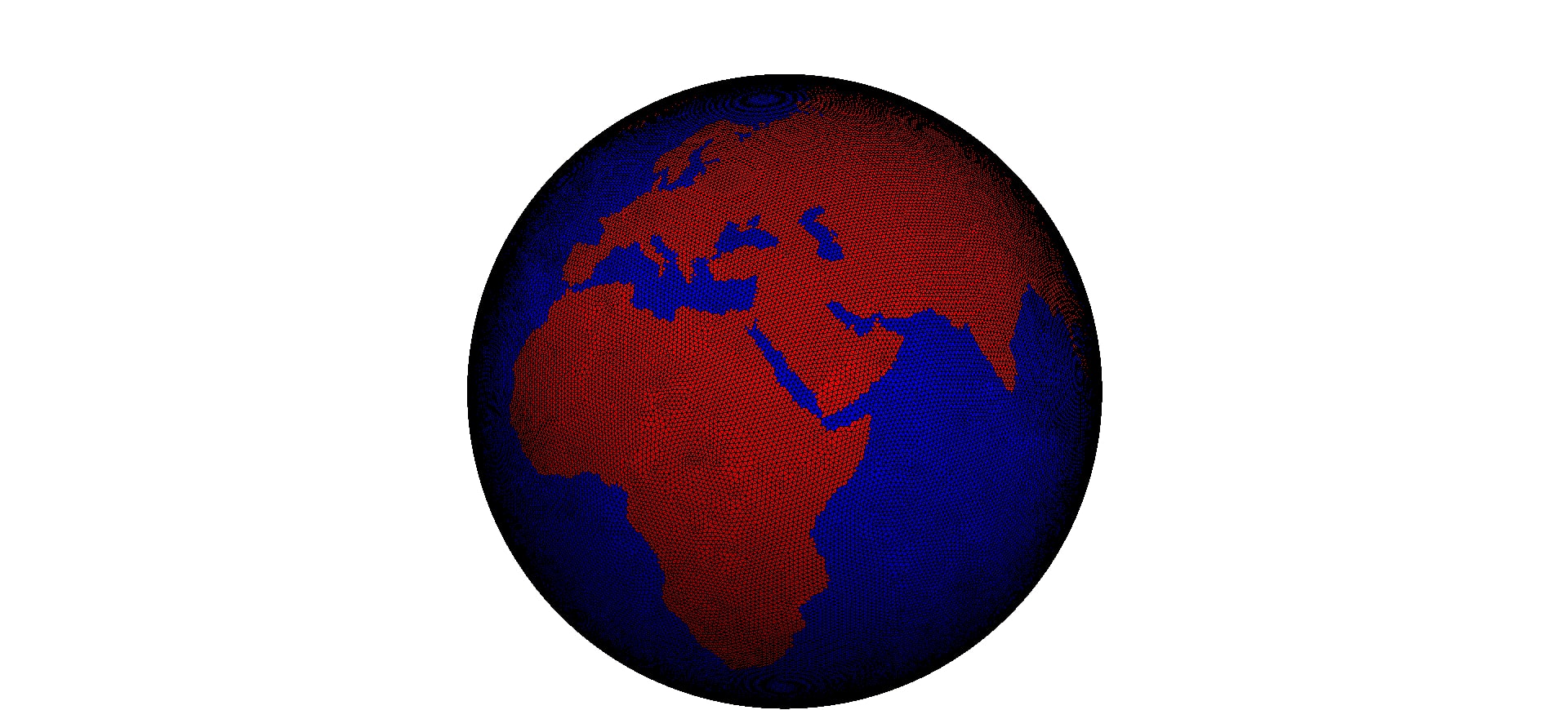} &         \includegraphics[width=.5\textwidth, trim = 450 0 450 0, clip]{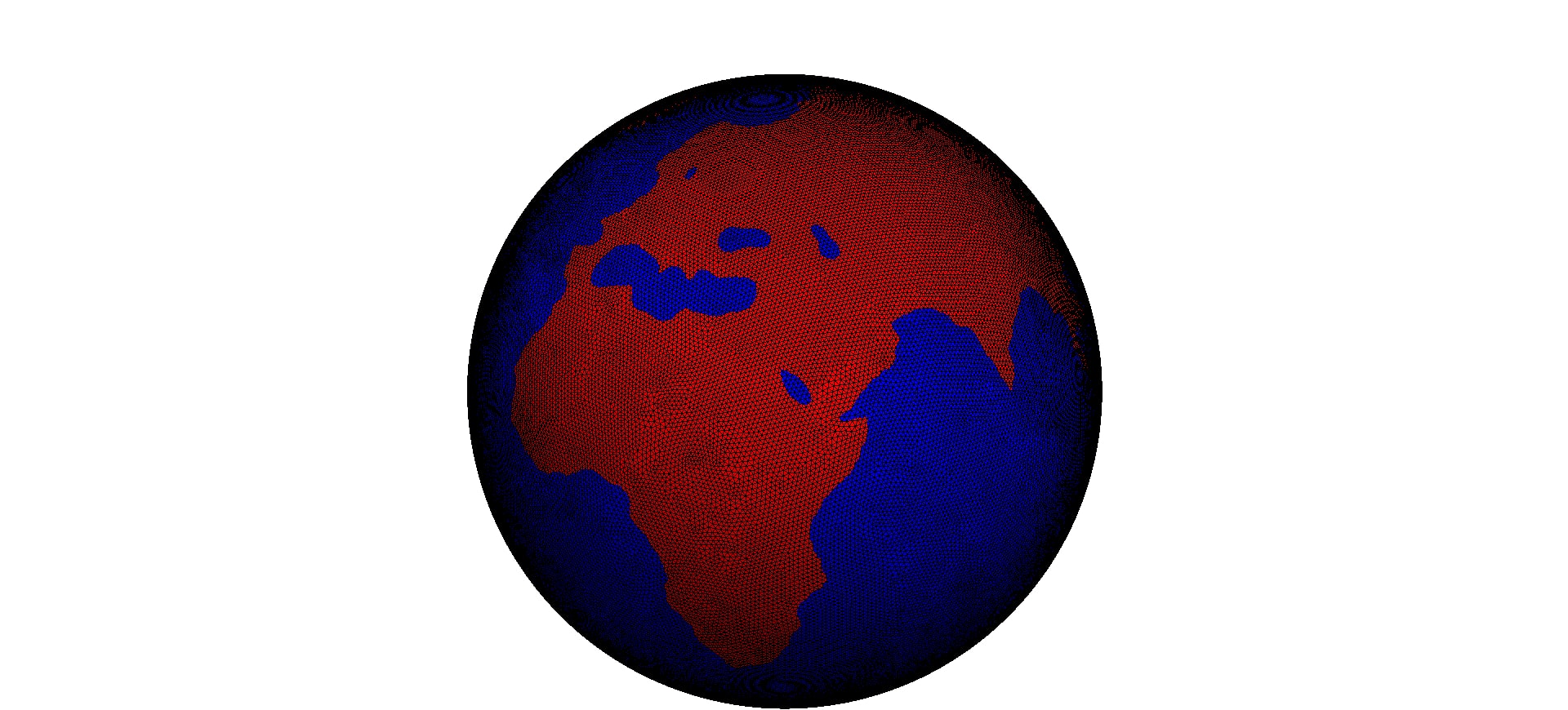} \\
        (c)&(d)
    \end{tabular}
    \end{center}
    \caption{Different views of desired and final geometry. Left column: desired material distribution $\Omega^*$. Right column: Material distribution obtained after 57 iterations of level set algorithm to \eqref{eq_problemNumerics} where $u_d$ is the numerical solution to \eqref{eq_stateNumerics} with $\Omega = \Omega^*$. }
    \label{fig_finalDesigns1}
\end{figure}

\begin{figure}
    \begin{center}
    \begin{tabular}{c|c}
        \includegraphics[width=.4\textwidth, trim = 450 0 450 0, clip]{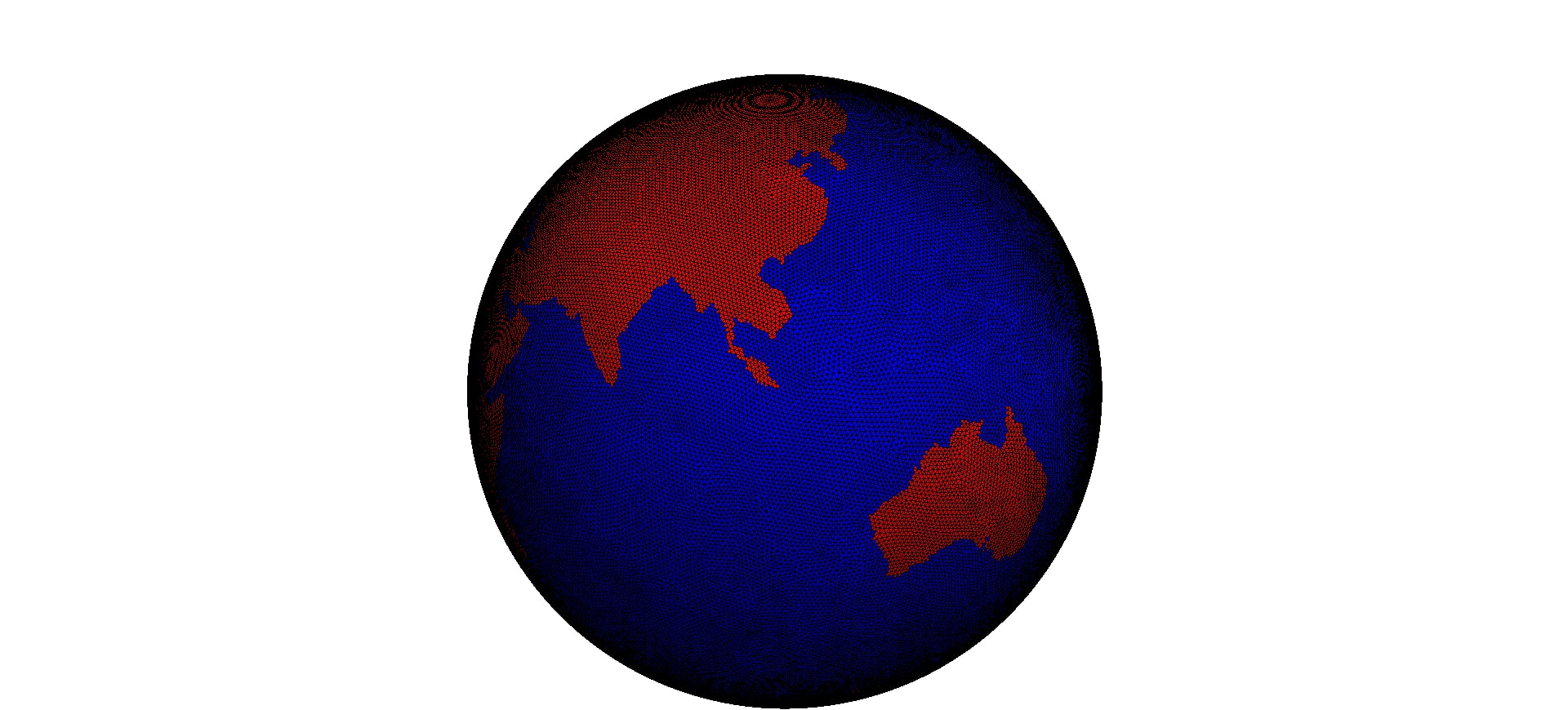} &         \includegraphics[width=.4\textwidth, trim = 450 0 450 0, clip]{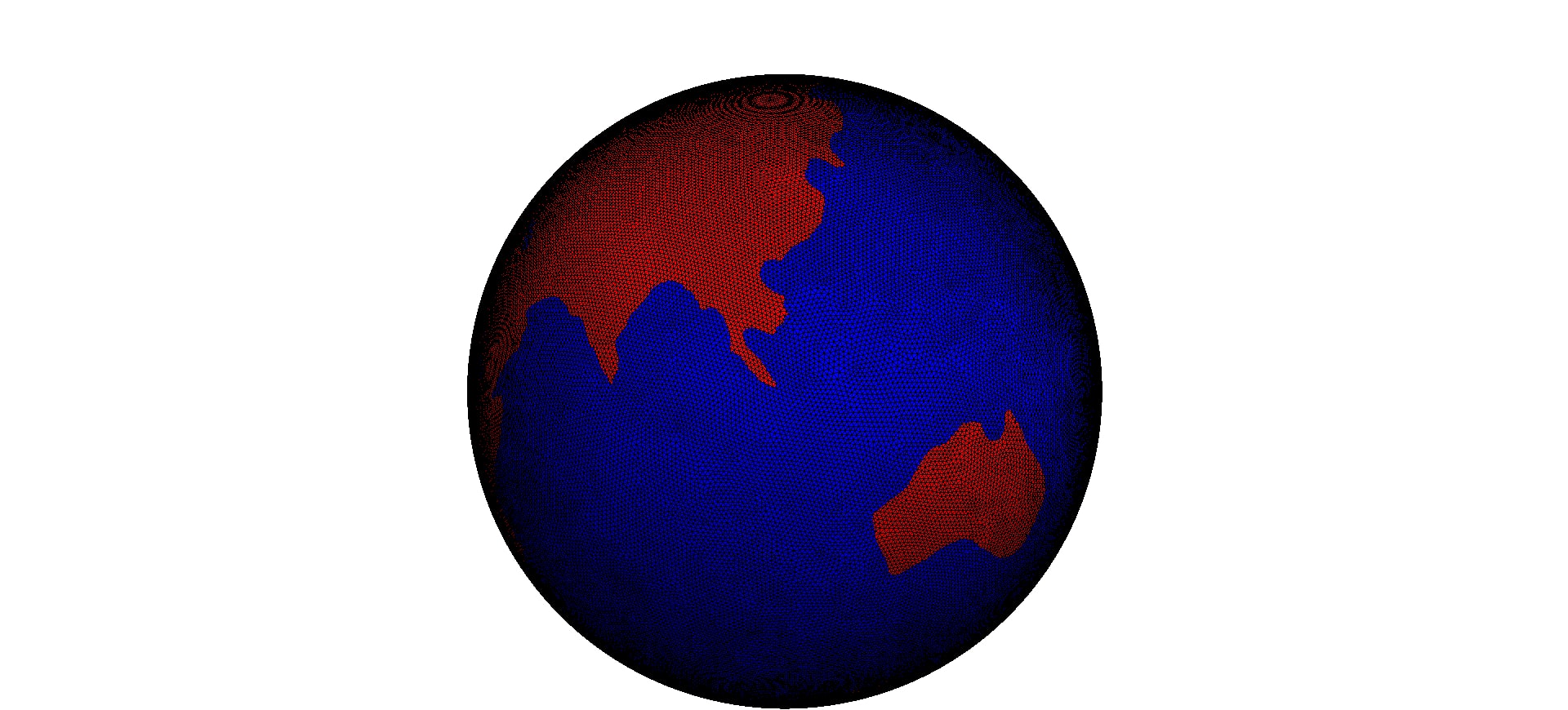} \\
        (a)&(b)\\
        \includegraphics[width=.4\textwidth, trim = 450 0 450 0, clip]{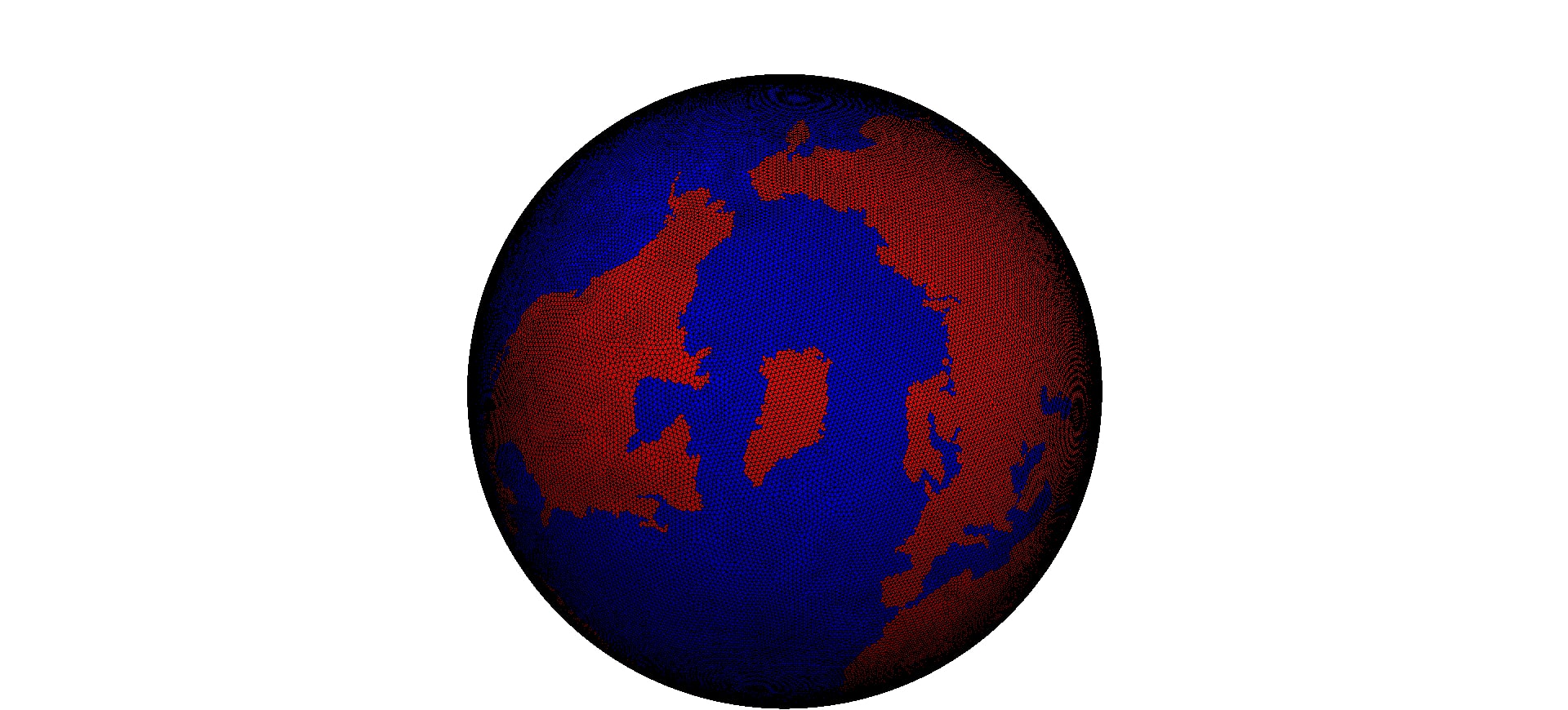} &         \includegraphics[width=.4\textwidth, trim = 450 0 450 0, clip]{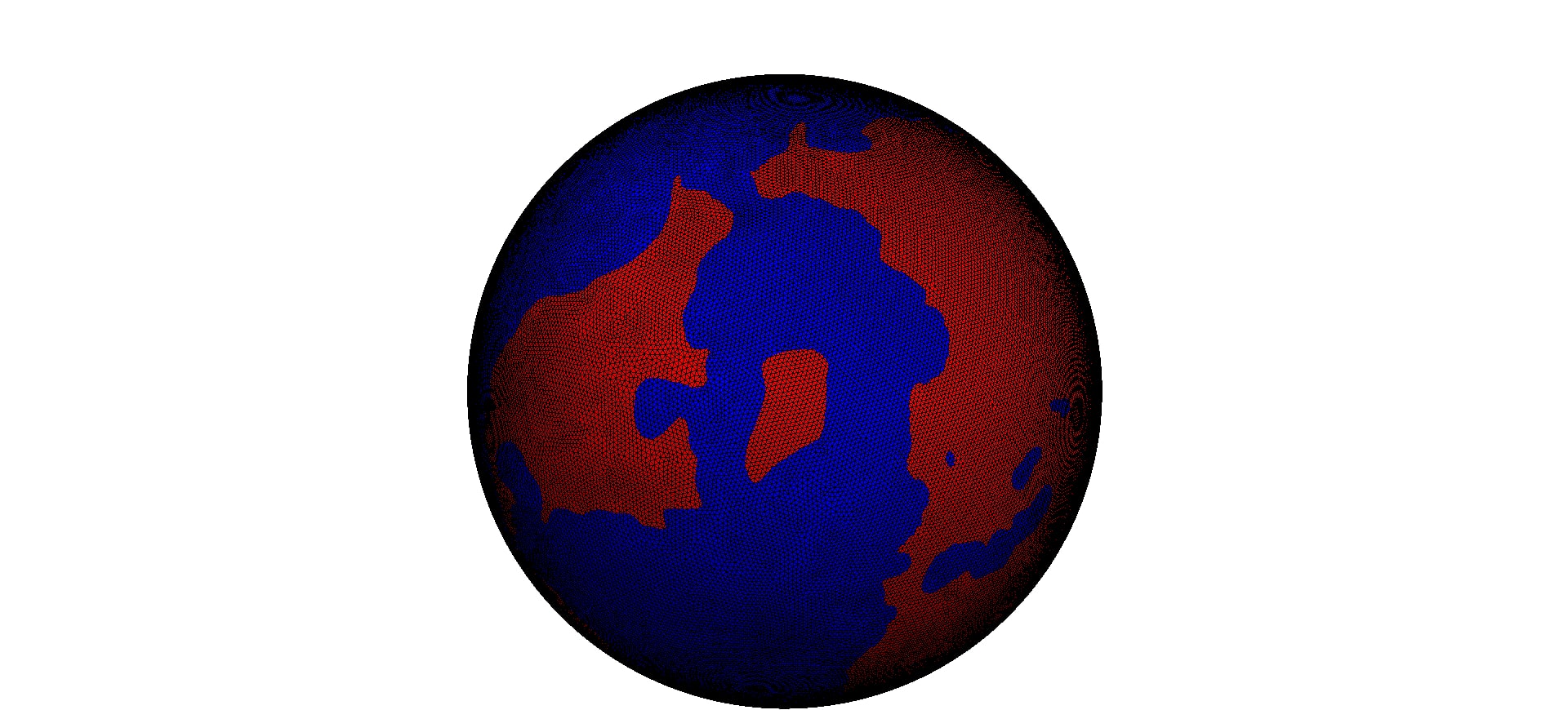} \\
        (c)&(d)\\
        \includegraphics[width=.4\textwidth, trim = 450 0 450 0, clip]{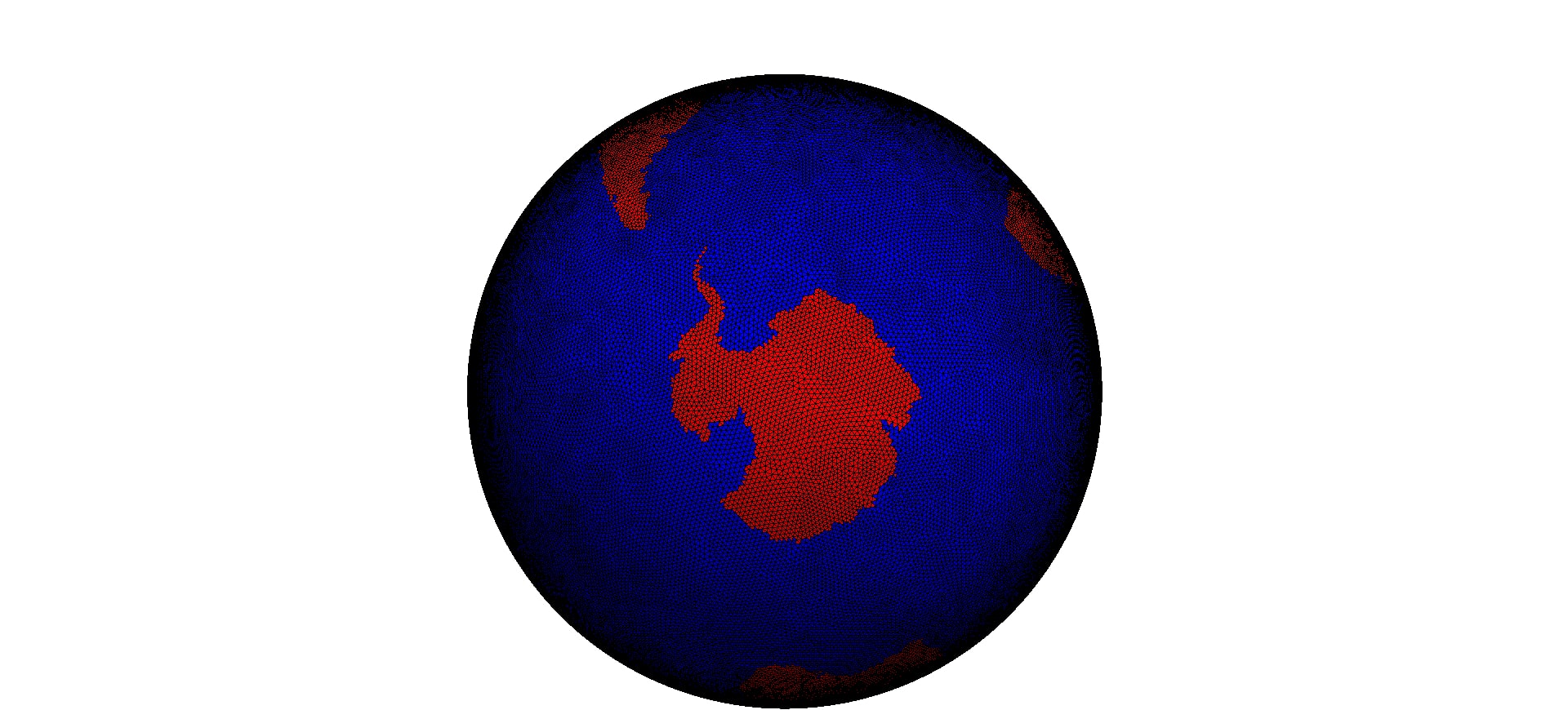} &         \includegraphics[width=.4\textwidth, trim = 450 0 450 0, clip]{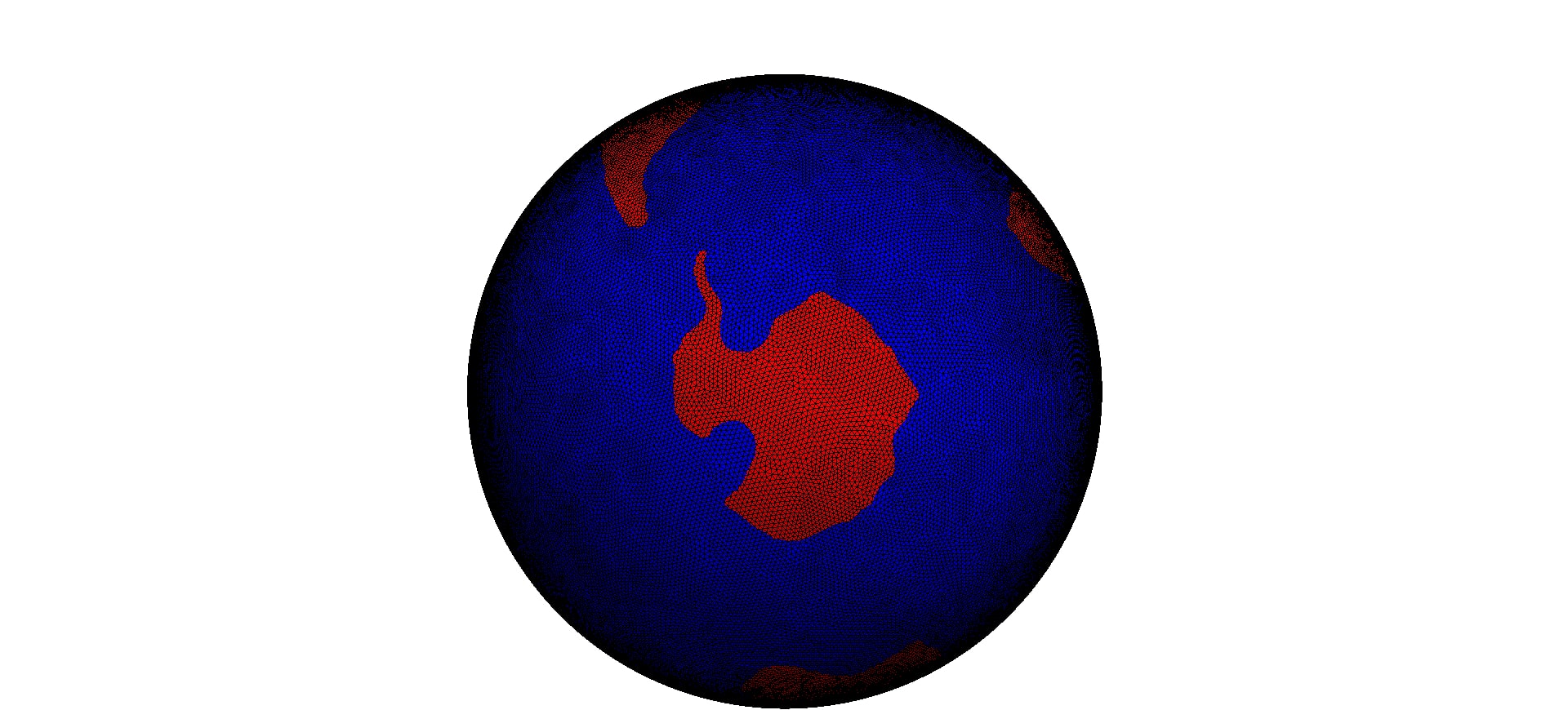} \\
        (e)&(f)
    \end{tabular}
    \end{center}
     \caption{Different views of desired and final geometry. Left column: desired material distribution $\Omega^*$. Right column: Material distribution obtained after 57 iterations of level set algorithm to \eqref{eq_problemNumerics} where $u_d$ is the numerical solution to \eqref{eq_stateNumerics} with $\Omega = \Omega^*$. }
    \label{fig_finalDesigns2}
\end{figure}

\section*{Conclusion}
In this paper we derived for the first time topological sensitivities for PDEs defined on surfaces. We showed how the sensitivities can be used in a level set algorithm on the surface and showed its performance in a numerical experiment. Our techniques open now the possibilities to derive sensitivities for other types of more general surface PDE such as the Laplace-Betrami equation involving differential forms, which will be part of future research. Another important issue which we will address in the future is the higher asymptotic expansion of the state equation of the surface and thus extending the results 
of Lemma~\ref{thm_Keps_K}. Lastly, another important question open for further research is the treatment of nonlinear equations on surfaces. Due to the non-linearity of manifolds this poses new and interesting challenges that could be addressed in future work.

\bibliography{topDerSurface_GanglSturm}
\bibliographystyle{plain}
\end{document}